\newtheorem{theorem}{Theorem}[section]
\newtheorem{lemma}[theorem]{Lemma}
\newtheorem{proposition}[theorem]{Proposition}
\newtheorem{corollary}[theorem]{Corollary}
\theoremstyle{definition}
\newtheorem{example}[theorem]{Example}
\theoremstyle{remark}
\newtheorem{conjecture}[theorem]{Conjecture}
\newtheorem{problem}[theorem]{Problem}
\numberwithin{equation}{section}
\begin{document}
	
	\title{A separation theorem for Hilbert $C^*$-modules}
	
	\author[R. Eskandari, M.S. Moslehian]{Rasoul Eskandari$^1$ \MakeLowercase{and} Mohammad Sal Moslehian$^2$}
	
	\address{$^1$Department of Mathematics Education, Farhangian University, P.O. Box 14665-889, Tehran, Iran.}
	\email{Rasul.eskandari@cfu.ac.ir, eskandarirasoul@yahoo.com}

	\address{$^2$Department of Pure Mathematics, Center of Excellence in Analysis on Algebraic Structures (CEAAS), Ferdowsi University of Mashhad, P.O. Box 1159, Mashhad 91775, Iran}
	\email{moslehian@um.ac.ir; moslehian@yahoo.com}

    \subjclass[]{Primary 46L08; Secondary 46L05, 46L10; 47C15.}
	\keywords{Hilbert $C^*$-module; Hilbert $W^*$-module; localization; state; self-duality.}
	
\begin{abstract}
Let $\mathscr E$ be a Hilbert $\mathscr A$-module over a $C^*$-algebra $\mathscr A$. For each positive linear functional $\omega$ on $\mathscr A$, we consider the localization $\mathscr E_\omega$ of $\mathscr E$, which is the completion of the quotient space $\mathscr E/\mathscr {N}_\omega$, where $\mathscr N_\omega=\{x\in \mathscr E:\omega\langle x,x\rangle=0\}$. Let $\mathscr H$ and $\mathscr K$ be closed submodules of $\mathscr E$ such that $\mathscr H\cap \mathscr K$ is orthogonally complemented, and let $\omega=\sum_{j=1}^{\infty}\lambda_j\omega_j$, where $\lambda_j>0$, $\sum_{j=1}^{\infty}\lambda_j=1$, and $\omega_j$'s are positive linear functionals on $\mathscr A$. We prove that if $(\mathscr H\cap \mathscr K)_{\omega_j}=\mathscr H_{\omega_j}\cap \mathscr K_{\omega_j}$ for each $j$, then 
\[
(\mathscr H\cap \mathscr K)_\omega=\mathscr H_\omega\cap \mathscr K_\omega\,.
\]
Furthermore, let $\mathscr L$ be a closed submodule of a Hilbert $\mathscr A$-module $\mathscr E$ over a $W^*$-algebra $\mathscr A$. We pose the following separation problem: ``Does there exist a normal state $\omega$ such that $\iota_\omega (\mathscr L)$ is not dense in $\mathscr E_\omega $?'' In this paper, among other results, we give an affirmative answer to this problem, when $\mathscr E$ is a self-dual Hilbert $C^*$-module over a $W^*$-algebra $\mathscr A$ such that $\mathscr E\backslash \mathscr L$ has a nonempty interior with respect to the weak$^*$-topology. This is a step toward answering the above problem.
\end{abstract}
	\maketitle
	\section{Introduction}
		Let $\mathscr A$ be a $C^*$-algebra and let $\mathscr E$ be a Hilbert $\mathscr A$-module. By $\mathrm{S}(\mathscr A)$ we denote the sets of all states on $\mathscr A$.		
		Let $\omega$ be a positive linear functional on $\mathscr A$. Then,
		\begin{equation}\label{inner product}
		\mathscr N_\omega=\{x\in \mathscr E: \omega\langle x,x\rangle=0\}
		\end{equation}
		is a closed subspace of $\mathscr E$. One can put an inner product $(\cdot,\cdot)_\omega$ on the quotient space $\mathscr E/\mathscr N_\omega$ by
		\[
		\left( x+\mathscr N_\omega, y+\mathscr N_\omega\right)_\omega:=\omega\langle x,y\rangle\qquad(x,y\in \mathscr E)\,.
		\]
Let $\mathscr E_\omega$ stand for the Hilbert space completion of $\mathscr E/\mathscr N_\omega$. The natural quotient map is the linear function $\iota_\omega:\mathscr E\to \mathscr E_\omega$, defined by $\iota_\omega(x)=x+\mathscr N_\omega\,\, (x\in \mathscr E)$.
Let $\pi_\omega:\mathscr A\to \mathbb{B}(\mathscr H_{\pi_\omega})$ be the G-N-S representation of $\mathscr A$ on the Hilbert space $\mathscr H_{\pi_\omega}$ associated with the state $\omega$.
Let $\mathscr E\otimes_\mathscr A\mathscr H_{\pi_\omega}$ be the Hilbert space completion of the algebraic tensor product $\mathscr E\otimes_\mathscr A\mathscr H_{\pi_\omega}$ with respect to the inner product 
 \[
 \langle x\otimes h,x'\otimes h'\rangle=\langle h,\pi\left(\langle x,x'\rangle\right) h'\rangle\quad(x,x'\in \mathscr E,h,h'\in \mathscr H_{\pi_\omega}).
 \]
Then $\mathscr E\otimes_\mathscr A\mathscr H_{\pi_\omega}$ and $\mathscr E_\omega$ are isomorphic, as shown in \cite[section 2.2]{Kaad}.\\
Let $S$ be a subset of a topological vector space. The $\sigma$-convex hull of $S$ is denoted by $\sigma\mbox{-co}(S)$ and defined as follows:
\[
\sigma\mbox{-co}(S):=\left\{\sum_{j=1}^{\infty}\lambda_js_j: s_j\in S,\lambda_j>0,\sum_{j=1}^{\infty}\lambda_j=1\right\}\,.
\]
 Consider the weak$^*$-topology on $\mathrm{S}(\mathscr A)$. It is evident that if $\mathscr A$ is a unital $C^*$-algebra and $\{\omega_j\}_{j=1}^\infty\subseteq \mathrm{S}(\mathscr A)$, then $\sigma\mbox{-co}(\{\omega_j\})\subseteq \mathrm{S}(\mathscr A)$. 

In the setting of $C^*$-algebras, the following theorems are well-known. 
\begin{theorem}\label{theorem jfa}\cite[Theorem 3.1]{Kaad}
	Let $\mathscr L$ be a closed convex subset of a Hilbert $C^*$-module $\mathscr E$ over $\mathscr A$. For
	each vector $x_0\in\mathscr E\backslash \mathscr L$ there exists a state $\omega$ on $\mathscr A$ and $y\in \mathscr E$ such that $\iota_\omega (y)$ is not in the closure of $\iota_\omega (\mathscr L)$.
	In particular, there exists a state $\omega$ such that $\iota_\omega (\mathscr L)$ is not dense in $\mathscr E_\omega $; and hence, $\iota_\omega (\mathscr L)^\perp\neq \{0\}$ when $\mathscr L$ is a closed submodule.
\end{theorem}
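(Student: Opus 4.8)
The plan is to reduce the separation to a single application of the Hahn--Banach theorem in the real Banach space $\mathscr A_{\mathrm{sa}}$ of self-adjoint elements, after transporting the geometric problem in $\mathscr E$ to a separation problem among the $\mathscr A$-valued inner products $\langle x_0-z,x_0-z\rangle$. First I would set $d:=\inf_{z\in\mathscr L}\|x_0-z\|$ and observe that $d>0$, since $\mathscr L$ is closed and $x_0\notin\mathscr L$. Because $\langle x,x\rangle\ge 0$ and $\|x\|^2=\|\langle x,x\rangle\|$, each element $\langle x_0-z,x_0-z\rangle$ is positive of norm at least $d^2$.

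The crucial step is to manufacture a convex set out of these inner products. Consider
\[
C:=\bigl\{\langle x_0-z,x_0-z\rangle + a : z\in\mathscr L,\ a\in\mathscr A_+\bigr\}\subseteq\mathscr A_+ .
\]
I would check its convexity via the operator-convexity of $z\mapsto\langle x_0-z,x_0-z\rangle$: writing $u=x_0-z_1$, $v=x_0-z_2$ and expanding by bilinearity, one gets
\[
\bigl\langle tu+(1-t)v,\,tu+(1-t)v\bigr\rangle = t\langle u,u\rangle+(1-t)\langle v,v\rangle - t(1-t)\langle u-v,u-v\rangle,
\]
so that $\langle x_0-z,x_0-z\rangle\le t\langle x_0-z_1,x_0-z_1\rangle+(1-t)\langle x_0-z_2,x_0-z_2\rangle$ for $z=tz_1+(1-t)z_2$; convexity of $\mathscr L$ then forces $C$ to be convex. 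Since every $c\in C$ dominates a positive element of norm $\ge d^2$, we have $\|c\|\ge d^2$, so $C$ is disjoint from the open ball $B:=\{a\in\mathscr A_{\mathrm{sa}}:\|a\|<d^2\}$.

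Next I would separate the convex set $C$ from the open convex set $B$ by Hahn--Banach, obtaining a nonzero bounded real-linear functional $f$ on $\mathscr A_{\mathrm{sa}}$ and a scalar $\gamma$ with $f(c)\ge\gamma\ge f(b)$ for all $c\in C$ and $b\in B$. Since $B$ is the open $d^2$-ball about the origin, $\gamma\ge d^2\|f\|>0$. The summand $\mathscr A_+$ in $C$ forces $f(a)\ge 0$ for every $a\in\mathscr A_+$ (else $f(c'+ta)\to-\infty$ as $t\to\infty$), so $f$ extends complex-linearly to a positive bounded functional on $\mathscr A$; normalizing to norm one yields a state $\omega$. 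Evaluating the separation at $c=\langle x_0-z,x_0-z\rangle$ gives $\omega\langle x_0-z,x_0-z\rangle\ge d^2$ for every $z\in\mathscr L$, i.e.\ $\|\iota_\omega(x_0)-\iota_\omega(z)\|_\omega\ge d$. Taking $y=x_0$ places $\iota_\omega(y)$ at distance at least $d$ from $\iota_\omega(\mathscr L)$, hence outside its closure; this already gives the ``in particular'' clause, and when $\mathscr L$ is a closed submodule, $\overline{\iota_\omega(\mathscr L)}$ is a proper closed subspace of the Hilbert space $\mathscr E_\omega$, so $\iota_\omega(\mathscr L)^\perp=\overline{\iota_\omega(\mathscr L)}^{\,\perp}\ne\{0\}$.

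I expect the essential obstacle to be the convexity of $C$, which rests entirely on the operator-convexity identity above and is the one genuinely creative ingredient; once $C$ is convex and separated from $B$, the extraction of a state is a matter of verifying that the $\mathscr A_+$ summand forces positivity of the separating functional and that normalization produces norm one. Everything else---positivity of $d$, contractivity of $\iota_\omega$, and the orthogonal-complement conclusion for a submodule---reduces to standard Hilbert-space and $C^*$-algebra facts.
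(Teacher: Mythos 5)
Your proof is correct. Note that the paper itself gives no proof of this statement---it is quoted verbatim from \cite[Theorem 3.1]{Kaad}---and your argument is essentially the original Kaad--Lesch one: form the convex set $C=\{\langle x_0-z,x_0-z\rangle+a : z\in\mathscr L,\ a\in\mathscr A_+\}$ (convex precisely because of the identity $\langle tu+(1-t)v,tu+(1-t)v\rangle=t\langle u,u\rangle+(1-t)\langle v,v\rangle-t(1-t)\langle u-v,u-v\rangle$ together with convexity of $\mathscr L$), separate it by Hahn--Banach from the open ball of radius $d^2$ in $\mathscr A_{\mathrm{sa}}$, use the $\mathscr A_+$ summand to force positivity of the separating functional, and normalize to a state $\omega$ satisfying $\omega\langle x_0-z,x_0-z\rangle\ge d^2$ for all $z\in\mathscr L$, so that $y=x_0$ works.
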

\begin{theorem}( \cite[Proposition 1.6]{Mes})\label{lemma Mesland}
	Let $\mathscr L \subseteq \mathscr E$ be a closed submodule.
	Then, $\mathscr L$ is complemented if and only if for every $\omega\in S(\mathscr A)$ there is an equality
	$(\mathscr L_\omega)^\perp =(\mathscr L^\perp)_\omega$.
\end{theorem}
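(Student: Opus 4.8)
My plan is to reduce the biconditional to a single always-valid inclusion together with a module-theoretic criterion for complementability. First I would observe that one inclusion holds for every state $\omega$ with no hypothesis: if $x\in\mathscr L$ and $y\in\mathscr L^\perp$ then $\langle x,y\rangle=0$, so $(\iota_\omega(x),\iota_\omega(y))_\omega=\omega\langle x,y\rangle=0$, and passing to closures yields $\mathscr L_\omega\perp(\mathscr L^\perp)_\omega$, that is $(\mathscr L^\perp)_\omega\subseteq(\mathscr L_\omega)^\perp$. Hence the asserted equality is equivalent, for each $\omega$, to the orthogonal fullness $\mathscr E_\omega=\mathscr L_\omega\oplus(\mathscr L^\perp)_\omega$. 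For the forward implication I assume $\mathscr E=\mathscr L\oplus\mathscr L^\perp$; applying $\iota_\omega$ and using density of $\iota_\omega(\mathscr E)$ gives $\mathscr E_\omega=\overline{\iota_\omega(\mathscr L)+\iota_\omega(\mathscr L^\perp)}=\mathscr L_\omega+(\mathscr L^\perp)_\omega$, which is an orthogonal sum of closed subspaces and hence closed, so $\mathscr E_\omega=\mathscr L_\omega\oplus(\mathscr L^\perp)_\omega$; a one-line Hilbert-space argument then upgrades the trivial inclusion to equality. This direction uses no separation theorem.

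For the converse the crux is a purely module-theoretic lemma: $\mathscr L$ is complemented whenever $\mathscr L+\mathscr L^\perp$ is dense in $\mathscr E$. To prove it I would define $P_0(l+m)=l$ on $\mathscr L+\mathscr L^\perp$ (well defined since $\mathscr L\cap\mathscr L^\perp=\{0\}$). Orthogonality gives $\langle l+m,l+m\rangle=\langle l,l\rangle+\langle m,m\rangle\geq\langle l,l\rangle$, so $\|P_0(l+m)\|\leq\|l+m\|$ and $P_0$ is a contractive $\mathscr A$-linear idempotent. Extending by continuity produces a bounded module map $P:\mathscr E\to\mathscr E$ with range $\mathscr L$ and kernel $\mathscr L^\perp$; the identity $\langle P_0x,y\rangle=\langle l,l'\rangle=\langle x,P_0y\rangle$ on the dense subspace shows $P$ is self-adjoint, so $P$ is an orthogonal projection and $\mathscr E=\mathscr L\oplus\mathscr L^\perp$.

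It then remains to extract this density from the hypothesis, and here I would invoke the separation theorem (Theorem \ref{theorem jfa}). Set $\mathscr M=\overline{\mathscr L+\mathscr L^\perp}$ and suppose, for contradiction, that $\mathscr M\neq\mathscr E$. Choosing $x_0\in\mathscr E\setminus\mathscr M$, Theorem \ref{theorem jfa} furnishes a state $\omega$ and a vector $y$ with $\iota_\omega(y)\notin\overline{\iota_\omega(\mathscr M)}$. But $\overline{\iota_\omega(\mathscr M)}\supseteq\mathscr L_\omega+(\mathscr L^\perp)_\omega$, and the hypothesis $(\mathscr L_\omega)^\perp=(\mathscr L^\perp)_\omega$ forces $\mathscr L_\omega+(\mathscr L^\perp)_\omega=\mathscr E_\omega$; thus $\iota_\omega(y)\in\mathscr E_\omega=\overline{\iota_\omega(\mathscr M)}$, a contradiction. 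Hence $\mathscr L+\mathscr L^\perp$ is dense, and the lemma of the previous paragraph concludes that $\mathscr L$ is complemented.

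The step I expect to be the main obstacle is precisely this converse passage from local to global. The family of state-by-state decompositions $\mathscr E_\omega=\mathscr L_\omega\oplus(\mathscr L^\perp)_\omega$ only yields \emph{density} of $\mathscr L+\mathscr L^\perp$, which at first looks too weak to produce an honest orthogonal summand; the orthogonality estimate $\|l\|\leq\|l+m\|$ is exactly what rescues it, promoting the densely defined projection to a genuine adjointable projection, while the separation theorem is the device that manufactures the single state needed to contradict any failure of density.
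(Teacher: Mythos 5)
Your proof is correct and complete: the always-valid inclusion $(\mathscr L^\perp)_\omega\subseteq(\mathscr L_\omega)^\perp$, the elementary forward direction, the contractive densely defined projection lemma (density of $\mathscr L+\mathscr L^\perp$ implies orthogonal complementability), and the appeal to Theorem \ref{theorem jfa} to rule out failure of density all hold up under scrutiny. Note that the paper itself offers no proof of this statement — it is quoted from Mesland and Rennie \cite[Proposition 1.6]{Mes} — so there is nothing internal to compare against; your argument is essentially the standard one from the cited source, whose converse direction likewise hinges on the Kaad--Lesch separation theorem, so you have in effect reconstructed the original proof.
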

In their study of regular operators in Hilbert $C^*$-modules, Kaad and Lesch provided a positive answer to the following conjecture:
\begin{conjecture}\label{conjecture}{\cite[Conjecture 5.2]{Kaad}}
In the situation of Theorem \ref{theorem jfa}, if $\mathscr L$ is a closed $\mathscr A$-submodule, then there exist a pure state $\omega$ and an element $x_0\in \mathscr E$ such that $\iota_\omega(x_0)$ is not in the closure of $\iota_\omega(\mathscr L)$. In particular, there exists a pure state $\omega$ such that $\iota_\omega(\mathscr L)$ is not dense in $\mathscr E_\omega$, and hence, $\iota_\omega(\mathscr L)^\perp\neq 0$. 
\end{conjecture}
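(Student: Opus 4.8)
The plan is to reduce the conjecture to the single assertion that there is a \emph{pure} state $\omega$ for which $\iota_\omega(\mathscr L)$ fails to be dense in $\mathscr E_\omega$; the element $x_0$ and the inequality $\iota_\omega(\mathscr L)^\perp\neq 0$ then follow at once, since $\iota_\omega(\mathscr E)$ is dense in $\mathscr E_\omega$ while the complement of the closed set $\overline{\iota_\omega(\mathscr L)}$ is open, so it meets $\iota_\omega(\mathscr E)$ in some $\iota_\omega(x_0)$. I would first dispose of the case $\mathscr L^\perp\neq\{0\}$: choosing $0\neq z\in\mathscr L^\perp$ and a pure state $\omega$ with $\omega\langle z,z\rangle>0$ (such a pure state exists because the pure states norm each nonzero positive element of $\mathscr A$) yields $0\neq\iota_\omega(z)\in\iota_\omega(\mathscr L)^\perp$. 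The substantial case is therefore $\mathscr L^\perp=\{0\}$ with $\mathscr L\neq\mathscr E$, in which $\mathscr L$ is necessarily non-complemented, since otherwise $\mathscr E=\mathscr L\oplus\mathscr L^\perp=\mathscr L$.

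Write $\mathrm{P}(\mathscr A)$ for the set of pure states and put
\[
G:=\{\omega\in\mathrm{S}(\mathscr A):\ \iota_\omega(\mathscr L)\ \text{is dense in}\ \mathscr E_\omega\}.
\]
By Theorem \ref{theorem jfa} we have $G\neq\mathrm{S}(\mathscr A)$, so it suffices to prove $\mathrm{P}(\mathscr A)\not\subseteq G$. I would argue by contradiction: assuming $\mathrm{P}(\mathscr A)\subseteq G$, I want to promote this to $G=\mathrm{S}(\mathscr A)$ and so contradict Theorem \ref{theorem jfa}. The engine for this promotion is a stability property of $G$, namely that it is closed under $\sigma$-convex combinations and, after the usual Choquet--Krein--Milman representation of an arbitrary state as a barycenter of a measure carried by $\overline{\mathrm{P}(\mathscr A)}$, under the corresponding barycentric integration. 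Establishing this stability is the heart of the matter.

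To locate the difficulty, let $\omega=\sum_j\lambda_j\omega_j$ with each $\omega_j\in G$, and use the canonical isometry $\mathscr E_\omega\hookrightarrow\bigoplus_j\mathscr E_{\omega_j}$ given by $\iota_\omega(x)\mapsto(\sqrt{\lambda_j}\,\iota_{\omega_j}(x))_j$, which identifies $\mathscr E_\omega$ with $\mathscr E\otimes_{\mathscr A}\mathscr H_{\pi_\omega}$ for a copy of $\mathscr H_{\pi_\omega}$ inside $\bigoplus_j\mathscr H_{\pi_{\omega_j}}$ and realizes $\iota_\omega(\ell)=\ell\otimes\Omega_\omega$, where $\Omega_\omega=(\sqrt{\lambda_j}\,\Omega_j)_j$ and $\Omega_j$ is the cyclic vector of the G-N-S representation $\pi_{\omega_j}$. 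A tail estimate reduces the question to finitely many indices, and the assertion $\omega\in G$ becomes the statement that $x\otimes\Omega_\omega$ lies in the closure of $\mathscr L\otimes\Omega_\omega$ inside $\mathscr E\otimes_{\mathscr A}\mathscr H$ for the \emph{fixed} vector $\Omega_\omega$.

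The main obstacle is precisely this fixed-vector, simultaneity problem. Density of each $\iota_{\omega_j}(\mathscr L)$ only yields density of $\mathscr L\otimes_{\mathscr A}\mathscr H$, with the second tensor factor free, in $\mathscr E\otimes_{\mathscr A}\mathscr H$; this is strictly weaker than approximating $x\otimes\Omega_\omega$ by elements $\ell\otimes\Omega_\omega$ with $\Omega_\omega$ held fixed. Equivalently, a naive approach through the concave, weak$^*$-upper-semicontinuous function $\omega\mapsto\inf_{\ell\in\mathscr L}\omega\langle x-\ell,x-\ell\rangle$ cannot succeed, because concavity forces its values to be \emph{smallest} at the extreme points, so positivity at an interior state need not propagate to any pure state for a \emph{single} $x$; indeed a minimax argument shows only that the supremum of this function over all states equals $\mathrm{dist}(x,\mathscr L)^2$, recovering Theorem \ref{theorem jfa} but not the pure-state refinement. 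The resolution I anticipate is to exploit the $\mathscr A$-module structure of $\mathscr L$—the invariance of $\iota_\omega(\mathscr L)$ under the module action, together with the action of the commutant $\pi(\mathscr A)'$ on $\mathscr H$—to decouple the finitely many localizations and solve the approximation simultaneously, letting the separating element $x$ vary with the chosen pure state. Carrying this out, alongside the measure-theoretic bookkeeping (metrizability and separability reductions, and the weak$^*$-closedness needed to run the barycentric argument), is where I expect the real work to lie.
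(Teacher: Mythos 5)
First, a point of orientation: the statement you were asked to prove is recorded in this paper as a \emph{conjecture} (of Kaad--Lesch), not as a theorem with an internal proof; the paper only notes that a proof exists in the corrigendum \cite{Kaad2}, following the argument of \cite[Proposition 1.16]{Pri}. That proof does not proceed by promoting pure-state density to mixed states at all --- it constructs the separating pure state directly --- so your plan is not a variant of the known argument but a different (and, as explained below, unworkable) route. More importantly, your proposal is not a proof: the reductions in your first two paragraphs are correct (the case $\mathscr L^\perp\neq\{0\}$, and the derivation of $x_0$ and of $\iota_\omega(\mathscr L)^\perp\neq 0$ from non-density), but the decisive step --- stability of your set $G$ under convex combinations and barycentric integration --- is explicitly deferred (``the resolution I anticipate\dots'', ``where I expect the real work to lie''). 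An outline that names the obstacle and stops at it is a problem analysis, not a proof.

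Two concrete mathematical points. (i) The obstacle you diagnose as ``the heart of the matter'' --- approximating $x\otimes\Omega_\omega$ with the cyclic vector held \emph{fixed} --- is in fact not an obstacle once $\mathscr L$ is a submodule: since $\ell a\otimes\Omega_\omega=\ell\otimes\pi_\omega(a)\Omega_\omega$ and $\mathscr L\mathscr A\subseteq\mathscr L$, cyclicity of $\Omega_\omega$ gives $\overline{\mathscr L\otimes\Omega_\omega}=\overline{\mathscr L\otimes_{\mathscr A}\mathscr H_{\pi_\omega}}$, so fixed-vector density and free-vector density coincide; combining this with the fact that the projection onto an invariant subspace lies in the commutant, one can indeed prove $\sigma$-convex stability of $G$ for submodules (it is only for convex non-modules that stability fails, as in Example~2.1 of this paper). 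So that part of your plan is salvageable. (ii) The fatal gap is elsewhere and is untouched by (i): a general state of a general $C^*$-algebra is \emph{not} a $\sigma$-convex combination of pure states, and its G-N-S representation need not be quasi-contained in the atomic representation. For $\mathscr A=C[0,1]$, the Lebesgue state has G-N-S space $L^2[0,1]$ with the multiplication action, which is disjoint from every irreducible representation; $\sigma$-convex (or direct-sum/subrepresentation) stability of $G$ says nothing about such states. Consequently your contradiction never materializes: the ``bad'' state furnished by Theorem~\ref{theorem jfa} may be exactly of this diffuse kind, and ``all pure states lie in $G$'' plus all the stability one can prove does not contradict it. Your appeal to Choquet/Krein--Milman barycenters over $\overline{\mathrm{P}(\mathscr A)}$ cannot bridge this, for the reason you yourself record: the relevant function $\omega\mapsto\inf_{\ell\in\mathscr L}\omega\langle x-\ell,x-\ell\rangle$ is concave and weak$^*$-upper semicontinuous, both of which point the wrong way for passing information from extreme points to barycenters (and $\sigma$-convex decompositions into pure states exist only in the normal/$W^*$ setting, cf.\ Theorem~\ref{sigma convex of von Neuamann alegebra}). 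This is precisely why the literature resolves Conjecture~\ref{conjecture} by the direct construction in \cite{Kaad2}, \cite{Pri} rather than by any pure-to-mixed promotion.
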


In \cite{Kaad2}, a proof of Conjecture \ref{conjecture} is presented, which is based on the arguments in the
proof of \cite[Proposition 1.16]{Pri}. 
We pose the following problem concerning Conjecture \ref{conjecture}:
\begin{problem}\label{conjecture2}
	Let $\mathscr E$ be a Hilbert $\mathscr A$-module over a $W^*$-algebra $\mathscr A$. 
	Does there exist a normal state $\omega$ such that $\iota_\omega (\mathscr L)$ is not dense in $\mathscr E_\omega $?
\end{problem}

	To gain more understanding of $C^*$-algebras and Hilbert $C^*$-modules, we recommend interested readers to consult \cite{R, Sa} for the former and \cite{MT} for the latter. 

	In this note, we provide various results, including an affirmative response to Problem \ref{conjecture2}, especially Corollary \ref{esk}, under certain mild conditions. 

\section{A Localization Of Hilbert $\mathscr A$-Modules On $C^*$-Algebras}
We begin this section with the following observation.

Let $\omega=\sum_{j=1}^k\lambda_j\omega_j$ with $\sum_{j=1}^k\lambda_j=1$, $\lambda_j> 0$, where $w_j$'s are positive linear functionals on a $C^*$-algebra $\mathscr{A}$.			
			 It can be verified that 
		\begin{equation}\label{eqcap}
		\mathscr N_\omega=\bigcap_{j=1}^n\mathscr N_{\omega_j}\,.
		\end{equation}
		Let us define the map
		\begin{equation}\label{remark1}
\phi_j:\mathscr E/\mathscr N_{\omega}\to \mathscr E/\mathscr N_{\omega_j},\quad \phi_j(x+\mathscr N_{\omega})=x+\mathscr N_{\omega_j}.
		\end{equation} It follows from \eqref{eqcap} that $\phi_j$ is well-defined. We have 
		\begin{equation}\label{eq norm}
		\|\phi_j(x+\mathscr N_\omega)\|^2=\|x+\mathscr N_{\omega_j}\|^2=\omega_j\langle x,x\rangle\leq \frac{1}{\lambda_j}\omega\langle x,x\rangle=\frac{1}{\lambda_j}\|x+\mathscr N_\omega\|^2.
		\end{equation}
		Hence, we can extend $\phi_j$ onto $\mathscr E_\omega$, which is represented by the same $\phi_j$ as a map from $\mathscr E_\omega$ to $\mathscr E_{\omega_j}$.
		 Note that for every $x,y\in \mathscr E$, 
		\begin{align*}
		(\phi_j(x+\mathscr N_\omega),\phi_j(y+\mathscr N_{\omega}))_{\omega_j}=(x+\mathscr N_{\omega_j},y+\mathscr N_{\omega_j})_{\omega_j}
		=\omega_j\langle x,y\rangle\quad(1\leq j\leq k)\,,
		\end{align*}
which yields that
 		\begin{align*}
		\sum_{j=1}^k\lambda_j(\phi_j(x+\mathscr N_\omega),\phi_j(y+\mathscr N_{\omega}))_{\omega_j}=\sum_{j=1}^k\lambda_j\omega_j\langle x,y\rangle=\omega\langle x,y\rangle =(x+\mathscr N_\omega,y+\mathscr N_\omega)_\omega\,.
		\end{align*}
		Since $\mathscr E/\mathscr N_\omega$ is dense in $\mathscr E_\omega$, we arrive at 
		\begin{equation}\label{convex state}
		(\tilde{u},\tilde{u}')_\omega=\sum_{j=1}^k\lambda_j(\phi_j(\tilde{u}),\phi_j(\tilde{u}'))_{\omega_j}\qquad(\tilde{u},\tilde{u}'\in \mathscr E_\omega)\,.
		\end{equation}

		The following example shows that $\overline{\iota_\omega(\mathscr L)}$ differs from 
		\[
		\{\tilde{z}\in \mathscr E_\omega: \phi_j(\tilde{z})\in \overline{\iota_{\omega_j}(\mathscr L)},1\leq j\leq n\}.
		\]
						
\begin{example}
		Consider the set $\{1,2\}$ with the discrete metric and consider the unital $C^*$-algebra $\mathscr A := \mathrm {C}( \{1,2\} )$ as a Hilbert $C^*$-module $\mathscr E$ over itself. Take into account the indicator functions $p_1$ and $p_2$ associated with the points 1 and 2 in $\{1,2\}$, i.e.,
		\[
p_1(x)=\begin{cases}
1&\quad x=1\\
0&\quad x=2
\end{cases}\qquad p_2(x)=\begin{cases}
0&\quad x=1\\
1&\quad x=2
\end{cases}
\]
		 and define the closed convex subset
		$
		\mathscr L := \{ \lambda ( p_1 + p_2) : \lambda \in \mathbb{C} \} \subseteq \mathscr E.
		$
		Consider the states $\omega_1$ and $\omega_2 : \mathscr A \to \mathbb{C}$ evaluating at the points $1$ and $2$, respectively, and put $\omega = \frac{1}{2} \omega_1 +\frac{1}{2} \omega_2$.		 	
	Since 
	\[
	\mathscr N_{\omega_1}=\{f\in \mathscr E: f(1)=0\},\quad\mathscr N_{\omega_2}=\{f\in \mathscr E: f(2)=0\},\quad \mbox{and}\quad \mathscr N_\omega=\{0\},
	\]
	from \eqref{inner product} we conclude that	the Hilbert space $\mathscr E_\omega$ coincides with $\ell^2(\{1,2\})$ under the normalized counting measure, whereas the Hilbert spaces $\mathscr E_{\omega_1}$ and $\mathscr E_{\omega_2}$ correspond to $\ell^2(\{1\})$ and $\ell^2(\{2\})$, respectively. 
		
		Moreover, it is held that $\iota_{\omega_1}(\mathscr L) =\mathscr E_{\omega_1}$ and $\iota_{\omega_2}(\mathscr L) = \mathscr E_{\omega_2}$. However,
		\begin{align*}
	\|p_1-p_2+\mathscr N_\omega\|^2=\omega\langle p_1-p_2+\mathscr N_\omega, p_1-p_2+\mathscr N_\omega\rangle =\omega (p_1+p_2+\mathscr N_\omega)=1.
		\end{align*}
		Thus, $p_1 – p_2+\mathscr N_\omega$ is a nontrivial vector in the orthogonal complement of $\iota_\omega(\mathscr L)$. Since, $\phi_1(p_1-p_2+\mathscr N_\omega)=p_1+\mathscr N_{\omega_1}\in \overline{\iota_{\omega_1}(\mathscr L)}$ and $\phi_2(p_1-p_2+\mathscr N_\omega)=-p_2+\mathscr N_{\omega_2}\in \overline{\iota_{\omega_2}(\mathscr L)}$, we arrive at 
		\[
	\overline{\iota_\omega(\mathscr L)}\subsetneq \{\tilde{z}\in \mathscr E_\omega: \phi_j(\tilde{z})\in \overline{\iota_{\omega_j}(\mathscr L)},j=1,2\}\,.
		\]
	\end{example}
	In the following theorem, we identify $\overline{\iota_\omega(\mathscr L)}$.	
		\begin{theorem}\label{thconvex}
	 Let $\mathscr E$ be a Hilbert $\mathscr A$-module. Let $\mathscr L$ be a subset of $\mathscr E$. Let $\omega,\omega_1,\ldots,\omega_n$ be positive functionals such that $\omega=\sum_{j=1}^n\lambda_j\omega_j$ with $\sum_{j=1}^n\lambda_j=1$ and $\lambda_j>0$.	
	 	Then, 
	 	\[
	 \overline{\iota_\omega(\mathscr L)}=\{\tilde{z}\in \mathscr E_\omega: {\rm there~exists~} \{x_k\}\subseteq \mathscr L {\rm ~such~that~} \lim_k(x_k+\mathscr N_{\omega_j})=\phi_j(\tilde{z}),1\leq j\leq n\}\,.
	 	\]
		\end{theorem}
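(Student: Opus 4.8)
The plan is to denote by $\mathcal S$ the set on the right-hand side and to prove the two inclusions $\overline{\iota_\omega(\mathscr L)}\subseteq\mathcal S$ and $\mathcal S\subseteq\overline{\iota_\omega(\mathscr L)}$ separately. The single tool driving both is the norm identity that comes from \eqref{convex state} upon setting $\tilde u=\tilde u'$: for every $\tilde u\in\mathscr E_\omega$,
\[
\|\tilde u\|_\omega^2=\sum_{j=1}^n\lambda_j\,\|\phi_j(\tilde u)\|_{\omega_j}^2,
\]
together with the elementary identity $\phi_j(\iota_\omega(x))=x+\mathscr N_{\omega_j}$, which is immediate from the definition \eqref{remark1} of $\phi_j$.

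For the inclusion $\overline{\iota_\omega(\mathscr L)}\subseteq\mathcal S$, I would start from $\tilde z\in\overline{\iota_\omega(\mathscr L)}$ and choose a sequence $\{x_k\}\subseteq\mathscr L$ with $\iota_\omega(x_k)\to\tilde z$ in $\mathscr E_\omega$. Applying the norm identity to the vector $\iota_\omega(x_k)-\tilde z\in\mathscr E_\omega$ gives
\[
\|\iota_\omega(x_k)-\tilde z\|_\omega^2=\sum_{j=1}^n\lambda_j\,\big\|(x_k+\mathscr N_{\omega_j})-\phi_j(\tilde z)\big\|_{\omega_j}^2.
\]
Since the left-hand side tends to $0$ and every coefficient $\lambda_j$ is strictly positive, each summand tends to $0$; hence $x_k+\mathscr N_{\omega_j}\to\phi_j(\tilde z)$ for all $j$ simultaneously, with one common sequence $\{x_k\}$, which is exactly membership in $\mathcal S$.

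The reverse inclusion $\mathcal S\subseteq\overline{\iota_\omega(\mathscr L)}$ runs through the same identity read in the opposite direction. Given $\tilde z\in\mathcal S$ with a witnessing sequence $\{x_k\}\subseteq\mathscr L$ satisfying $x_k+\mathscr N_{\omega_j}\to\phi_j(\tilde z)$ for each $j$, the displayed formula again expresses $\|\iota_\omega(x_k)-\tilde z\|_\omega^2$ as the finite sum of the quantities $\lambda_j\|(x_k+\mathscr N_{\omega_j})-\phi_j(\tilde z)\|_{\omega_j}^2$. Here the finiteness of $n$ is essential: each of the finitely many terms tends to $0$, so the sum does as well, whence $\iota_\omega(x_k)\to\tilde z$ and $\tilde z\in\overline{\iota_\omega(\mathscr L)}$.

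I do not expect a serious obstacle, since both inclusions reduce to the bookkeeping of passing convergence across the finite decomposition \eqref{convex state}. The one genuinely important point --- and the reason the naive componentwise description refuted by the Example fails --- is that membership in $\mathcal S$ requires a \emph{single} sequence $\{x_k\}$ that works for all indices $j$ at once; the strict positivity of the $\lambda_j$ and the finiteness of the sum are precisely what convert such simultaneous componentwise convergence into genuine convergence in $\mathscr E_\omega$.
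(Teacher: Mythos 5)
Your proof is correct, and the nontrivial inclusion is handled more directly than in the paper. Both arguments rest on the identity \eqref{convex state}, and your forward inclusion (continuity of $\phi_j$, equivalently the bound $\|\phi_j(\tilde u)\|_{\omega_j}\le \lambda_j^{-1/2}\|\tilde u\|_\omega$ from \eqref{eq norm}) coincides with the paper's. The difference is in the reverse inclusion: the paper never applies \eqref{convex state} to the vector $\iota_\omega(x_k)-\tilde z$ itself. Instead it applies the identity to the differences $x_k-x_{k'}$ to show that $\{x_k+\mathscr N_\omega\}$ is Cauchy in $\mathscr E_\omega$, then tests inner products against the dense subspace $\iota_\omega(\mathscr E)$ to show that $x_k+\mathscr N_\omega$ converges weakly to $\tilde z$, and finally combines Cauchyness with the weak limit to get norm convergence. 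Your one-line computation
\[
\|\iota_\omega(x_k)-\tilde z\|_\omega^2=\sum_{j=1}^n\lambda_j\bigl\|(x_k+\mathscr N_{\omega_j})-\phi_j(\tilde z)\bigr\|_{\omega_j}^2\longrightarrow 0
\]
is legitimate, because \eqref{convex state} is stated for all $\tilde u,\tilde u'\in\mathscr E_\omega$ (it extends by continuity from the dense quotient, each $\phi_j$ being bounded and linear), and it collapses the paper's three steps into one. What the paper's longer route buys is a template that survives the passage to countably many $\omega_j$'s: in Theorem \ref{thmain} the right-hand set must be cut down by a boundedness hypothesis, and the argument proceeds through weak compactness, an isometric embedding into $\oplus_j\mathscr E_{\omega_j}$, and identification of weak limits --- precisely because your direct summation breaks down when the sum over $j$ is infinite (termwise convergence of infinitely many summands does not give convergence of the sum). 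Your closing remark correctly identifies finiteness of $n$ and strict positivity of the $\lambda_j$ as the crux, so this limitation is one you already anticipated.
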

	\begin{proof}
	
		Let $\tilde{z}\in \overline{\iota_\omega(\mathscr L)}$. There is a sequence $\{x_k\}\subseteq \mathscr L$ such that $$\lim_{k\to\infty}(x_k+\mathscr N_\omega)= \tilde{z}.$$
		Since $\phi_j$ is defined in \eqref{remark1} as a bounded operator, we have $\lim_{k\to\infty}\phi_j(x_k+\mathscr N_\omega)= \phi_j(\tilde{z})$. Therefore, $\lim_{k\to\infty}x_k+\mathscr N_{\omega_j}=\phi_j(\tilde{z})$ for all $1\leq j\leq n$.\\ 
		Next, 		
		let $\tilde{z}\in \mathscr E_\omega$ be such that there is a sequences $\{x_{k}\}\subseteq \mathscr L$ with 
		\begin{align}\label{eq001}
		\lim_k(x_{k}+\mathscr N_{\omega_j})=\phi_j(\tilde{z})\,\quad(1\leq j\leq n)\,.
		\end{align}
		 Then, 
		\begin{align*}
		\|x_k+\mathscr N_\omega-x_{k'}+\mathscr N_\omega\|^2&=\left\|(x_{k}-x_{k'})+\mathscr N_\omega\right\|^2\\
		&=\omega\left\langle (x_{k}-x_{k'}),(x_{k}-x_{k'})\right\rangle\\
		&=\sum_{j=1}^n\lambda_j\omega_j\langle x_{k}-x_{k'},x_{k}-x_{k'}\rangle\\
		&=\sum_{j=1}^n\lambda_j\|x_k+\mathscr N_{\omega_j}-x_{k'}+\mathscr N_{\omega_j}\|^2\,.
		\end{align*}
		This, together with \eqref{eq001}, shows that $\{x_k+\mathscr N_\omega\}$ is a Cauchy sequence. On the other hand, let $x\in \mathscr E$ be arbitrary. Then, 
		\begin{align*}
		\lim_{k\to\infty}(x_k+\mathscr N_\omega,x+\mathscr N_\omega)_\omega=\lim_{k\to\infty}\omega\langle x_k,x\rangle&=\lim_{k\to\infty}\sum_{j=1}^n\lambda_j\omega_j(\langle x_{k},x\rangle)\\
		&=\lim_{k\to\infty}\sum_{j=1}^n\lambda_j(x_{k}+\mathscr N_{\omega_j},x+\mathscr N_{\omega_j})_{\omega_j}\\
		&= \sum_{j=1}^n\lambda_j(\phi_j(\tilde{z}),\phi_j(x+\mathscr N_{\omega}))_{\omega_j}\\
		&=(\tilde{z}, x+\mathscr N_\omega)_\omega\quad(\mbox{by~\eqref{convex state}})\,.
		\end{align*}
		Hence, $x_k+\mathscr
		 N_\omega$ weakly converges to $\tilde{z}$ and since it is a Cauchy sequence we conclude that $x_k+\mathscr
		 N_\omega$ converges to $\tilde{z}$ in the norm topology. 
		\end{proof}
		\begin{lemma}\label{lemma s}
 Let $\mathscr E$ be a Hilbert $\mathscr A$-module and let $\mathscr L$ be a closed submodule of $\mathscr E$. Then, $\mathscr L_\omega$ is unitarily isomorphic to $\overline{\iota_\omega(\mathscr L)}$ for each $\omega\in \mathrm{S}(\mathscr A)$.
		\end{lemma}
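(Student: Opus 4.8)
The plan is to realize both Hilbert spaces as completions of essentially the same pre-Hilbert space, so that a single isometry on the level of quotients extends to the desired unitary. The starting observation is that $\mathscr L$, equipped with the restriction of the $\mathscr A$-valued inner product, is itself a Hilbert $\mathscr A$-module, and the null space governing its localization is
\[
\{x\in\mathscr L:\omega\langle x,x\rangle=0\}=\mathscr N_\omega\cap\mathscr L\,.
\]
Thus $\mathscr L_\omega$ is by definition the Hilbert space completion of $\mathscr L/(\mathscr N_\omega\cap\mathscr L)$, with inner product $(x+\mathscr N_\omega\cap\mathscr L,\,y+\mathscr N_\omega\cap\mathscr L)_\omega=\omega\langle x,y\rangle$ for $x,y\in\mathscr L$.

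Next I would introduce the natural map
\[
V_0:\mathscr L/(\mathscr N_\omega\cap\mathscr L)\to\mathscr E/\mathscr N_\omega,\qquad V_0\bigl(x+\mathscr N_\omega\cap\mathscr L\bigr)=x+\mathscr N_\omega\,,
\]
and check the routine properties. It is well defined and injective precisely because $x-y\in\mathscr N_\omega$ with $x,y\in\mathscr L$ forces $x-y\in\mathscr N_\omega\cap\mathscr L$, so the two quotient relations agree on $\mathscr L$. It is obviously linear, and it preserves the inner product since both sides compute $\omega\langle x,y\rangle$; in particular $V_0$ is isometric. Its range is exactly $\iota_\omega(\mathscr L)\subseteq\mathscr E/\mathscr N_\omega\subseteq\mathscr E_\omega$.

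Since $V_0$ is an isometric linear map between pre-Hilbert spaces, it extends uniquely to an isometry $V:\mathscr L_\omega\to\mathscr E_\omega$ on the completions. Because $\mathscr L_\omega$ is complete and $V$ is isometric, the range $V(\mathscr L_\omega)$ is a closed subspace of $\mathscr E_\omega$; as it contains the dense subspace $\iota_\omega(\mathscr L)$ of $\overline{\iota_\omega(\mathscr L)}$ and is closed, it coincides with $\overline{\iota_\omega(\mathscr L)}$. Hence $V$ is a surjective isometry from $\mathscr L_\omega$ onto $\overline{\iota_\omega(\mathscr L)}$, i.e. a unitary isomorphism, which is what we want.

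I do not anticipate a genuine obstacle here: the content is entirely in the identity $\{x\in\mathscr L:\omega\langle x,x\rangle=0\}=\mathscr N_\omega\cap\mathscr L$, which makes the inclusion $\mathscr L\hookrightarrow\mathscr E$ descend to a well-defined \emph{injective} isometry between quotients rather than merely a contraction. The only point deserving care is the final closed-range argument guaranteeing surjectivity onto the closure $\overline{\iota_\omega(\mathscr L)}$ rather than just onto a dense subspace, but this is standard once one knows $V$ is an isometry with complete domain.
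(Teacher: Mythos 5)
Your proof is correct and is essentially the paper's own argument: the paper works with the inverse of your map, namely $\phi:\iota_\omega(\mathscr L)\to\mathscr L/\mathscr N$, $\phi(x+\mathscr N_\omega)=x+\mathscr N$ where $\mathscr N=\{x\in\mathscr L:\omega\langle x,x\rangle=0\}$ (your $\mathscr N_\omega\cap\mathscr L$), verifies the same norm identity $\|x+\mathscr N\|^2=\omega\langle x,x\rangle=\|x+\mathscr N_\omega\|^2$, and extends to the completions to obtain the unitary. The only differences are the direction of the map and that you make explicit the closed-range argument for surjectivity, which the paper leaves implicit.
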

		\begin{proof}
We consider the following closed subspace: 
$$\mathscr N:=\{x\in \mathscr L: \omega\langle x,x\rangle=0\}.$$
In addition, let $\phi:\iota_\omega(\mathscr L)\to \mathscr L/\mathscr N$ be such that $\phi(x+\mathscr N_\omega)=x+\mathscr N$. Then, $\phi$ is a well-defined isomorphism. In fact, 
\[
\|\phi(x+\mathscr N_\omega)\|^2=\|x+\mathscr N\|^2=\omega(\langle x,x\rangle)=\|x+\mathscr N_\omega\|^2\quad(x\in \mathscr L).
\]
The extension of $\phi$ on $\overline{\iota_\omega(\mathscr L)}$ is a unitary operator from $\overline{\iota_\omega(\mathscr L)}$ to $\mathscr L_\omega$.
		\end{proof}
	\begin{corollary}\label{esk}
Let $\mathscr H$ and $\mathscr K$ be closed submodules of $\mathscr E$ such that $\mathscr H\cap \mathscr K$ is an orthogonally complemented submodule. Let $\omega_1,\ldots,\omega_n$ be positive linear functionals on $\mathscr A$. If $(\mathscr H\cap \mathscr K)_{\omega_j}=\mathscr H_{\omega_j}\cap \mathscr K_{\omega_j}$ for each $j$, then 
\begin{equation}
(\mathscr H\cap \mathscr K)_\omega=\mathscr H_\omega\cap \mathscr K_\omega
\end{equation}
for all $\omega$ in the convex hull of $\{\omega_1,\ldots ,\omega_2\}$.
	\end{corollary}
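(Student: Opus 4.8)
The plan is to realize all three localizations as closed subspaces of the single Hilbert space $\mathscr E_\omega$ and then prove an equality of subspaces there. By Lemma \ref{lemma s}, for any closed submodule $\mathscr L$ one identifies $\mathscr L_\omega$ with $\overline{\iota_\omega(\mathscr L)}\subseteq\mathscr E_\omega$, and $\mathscr L_{\omega_j}$ with $\overline{\iota_{\omega_j}(\mathscr L)}\subseteq\mathscr E_{\omega_j}$. Under these identifications the claim becomes $\overline{\iota_\omega(\mathscr H\cap\mathscr K)}=\overline{\iota_\omega(\mathscr H)}\cap\overline{\iota_\omega(\mathscr K)}$. Writing $\omega=\sum_{j=1}^n\lambda_j\omega_j$ and discarding the indices with $\lambda_j=0$ (the hypothesis persists for the retained indices), I may assume $\lambda_j>0$ for all $j$, so that the maps $\phi_j$ of \eqref{remark1} and the identity \eqref{convex state} are at my disposal. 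The inclusion $\overline{\iota_\omega(\mathscr H\cap\mathscr K)}\subseteq\overline{\iota_\omega(\mathscr H)}\cap\overline{\iota_\omega(\mathscr K)}$ is immediate from $\mathscr H\cap\mathscr K\subseteq\mathscr H,\mathscr K$ together with monotonicity of $\iota_\omega$ under closure, so the whole difficulty lies in the reverse inclusion.

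The key device is the orthogonal complementation of $\mathscr M:=\mathscr H\cap\mathscr K$, which furnishes a self-adjoint idempotent adjointable module map $P\colon\mathscr E\to\mathscr E$ with range $\mathscr M$. Since $\omega\langle Px,Px\rangle=\omega\langle x,Px\rangle\le\omega\langle x,x\rangle$ (as $P^*P=P\le 1$), the map $P$ descends to a contraction on $\mathscr E/\mathscr N_\omega$ and extends to an operator $P_\omega$ on $\mathscr E_\omega$; from $P^2=P=P^*$ one reads off $P_\omega^2=P_\omega=P_\omega^*$, so $P_\omega$ is the orthogonal projection of $\mathscr E_\omega$ with range $\overline{\iota_\omega(\mathscr M)}=\mathscr M_\omega$. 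Repeating the construction at each $\omega_j$ produces the orthogonal projection $P_{\omega_j}$ onto $\mathscr M_{\omega_j}$. A direct check on the dense subspace $\mathscr E/\mathscr N_\omega$ then gives the intertwining relation $\phi_j P_\omega=P_{\omega_j}\phi_j$ for every $j$, valid on all of $\mathscr E_\omega$ by continuity.

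With these in hand the reverse inclusion is short. Let $\tilde z\in\overline{\iota_\omega(\mathscr H)}\cap\overline{\iota_\omega(\mathscr K)}$. Since $\phi_j$ is continuous and carries $\iota_\omega(\mathscr H)$ into $\iota_{\omega_j}(\mathscr H)$, it maps $\overline{\iota_\omega(\mathscr H)}$ into $\mathscr H_{\omega_j}$, and likewise into $\mathscr K_{\omega_j}$; hence $\phi_j(\tilde z)\in\mathscr H_{\omega_j}\cap\mathscr K_{\omega_j}=\mathscr M_{\omega_j}$ by hypothesis, so $P_{\omega_j}\phi_j(\tilde z)=\phi_j(\tilde z)$. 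The intertwining relation then yields $\phi_j(\tilde z-P_\omega\tilde z)=P_{\omega_j}\phi_j(\tilde z)-\phi_j(\tilde z)=0$ for all $j$, and applying \eqref{convex state} to $\tilde z-P_\omega\tilde z$ gives
\[
\|\tilde z-P_\omega\tilde z\|_\omega^2=\sum_{j=1}^n\lambda_j\|\phi_j(\tilde z-P_\omega\tilde z)\|_{\omega_j}^2=0,
\]
whence $\tilde z=P_\omega\tilde z\in\mathscr M_\omega=\overline{\iota_\omega(\mathscr H\cap\mathscr K)}$, as required.

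I expect the main obstacle to be precisely the passage from the local information at each $\omega_j$ to a global statement at $\omega$. The preceding Example warns that for a general convex set one cannot hope to splice the per-$j$ approximating sequences into one sequence, so a direct appeal to the characterization in Theorem \ref{thconvex} does not suffice. Orthogonal complementation circumvents this by replacing approximating sequences with a genuine, globally defined projection $P_\omega$ whose compatibility with the $\phi_j$ lets \eqref{convex state} collapse the error term to zero; verifying that $P_\omega$ is indeed the orthogonal projection onto $\mathscr M_\omega$ and intertwines correctly with the $\phi_j$ is the technical heart of the argument.
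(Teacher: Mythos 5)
Your proof is correct, but it takes a genuinely different route from the paper's. The paper argues by duality: it takes $\tilde z\in\left((\mathscr H\cap\mathscr K)_\omega\right)^\perp$, invokes Mesland's result (Theorem \ref{lemma Mesland}) to rewrite this as $\tilde z\in\left((\mathscr H\cap\mathscr K)^\perp\right)_\omega$, then uses the sequence characterization of Theorem \ref{thconvex} twice --- once to push $\tilde z$ down to each $\mathscr E_{\omega_j}$, once to show that any $\tilde u\in\mathscr H_\omega\cap\mathscr K_\omega$ satisfies $\phi_j(\tilde u)\in\mathscr H_{\omega_j}\cap\mathscr K_{\omega_j}=(\mathscr H\cap\mathscr K)_{\omega_j}$ --- and finally sums the resulting orthogonality relations via \eqref{convex state} to conclude $\mathscr H_\omega\cap\mathscr K_\omega\subseteq\left(\left((\mathscr H\cap\mathscr K)_\omega\right)^\perp\right)^\perp=(\mathscr H\cap\mathscr K)_\omega$. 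You instead exploit the orthogonal complementation directly: you localize the adjointable projection $P$ onto $\mathscr H\cap\mathscr K$ to orthogonal projections $P_\omega$, $P_{\omega_j}$ whose ranges are the localized submodules, prove the intertwining relation $\phi_jP_\omega=P_{\omega_j}\phi_j$, and then show for $\tilde z\in\mathscr H_\omega\cap\mathscr K_\omega$ that $\phi_j(\tilde z-P_\omega\tilde z)=0$ for every $j$, so that \eqref{convex state} forces $\tilde z=P_\omega\tilde z$. Your argument is more self-contained and constructive --- it bypasses both Mesland's theorem and Theorem \ref{thconvex}, uses only the easy functoriality of localization for adjointable operators, and produces the element of $(\mathscr H\cap\mathscr K)_\omega$ explicitly rather than through a double orthocomplement; you also handle the boundary case $\lambda_j=0$ explicitly, a point the paper glosses over. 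What the paper's route buys is an illustration of its main tool (Theorem \ref{thconvex}) and a template that carries over verbatim to the $\sigma$-convex version proved in the next corollary, though your projection argument would also extend there by replacing \eqref{convex state} with the isometry $\Psi$ from Theorem \ref{thmain}. One cosmetic slip: you wrote $\phi_j(\tilde z-P_\omega\tilde z)=P_{\omega_j}\phi_j(\tilde z)-\phi_j(\tilde z)$; the correct order is $\phi_j(\tilde z)-P_{\omega_j}\phi_j(\tilde z)$, but since both vanish this is immaterial.
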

\begin{proof}
Let $\omega$ be in the convex hull of $\{\omega_1,\ldots ,\omega_2\}$, denoted by  $\mbox{co}\{\omega_1,\ldots ,\omega_2\}$. Then, $\omega=\sum_{j=1}^n\lambda_j\omega_j$ where $\sum_{j=1}^n\lambda_j=1$. 
It is clear that $(\mathscr H\cap \mathscr K)_\omega\subseteq \mathscr H_{\omega}\cap\mathscr K_\omega$. Now, let $\tilde{z}\in \left((\mathscr H\cap\mathscr K)_\omega\right)^\perp$. By Lemma \ref{lemma Mesland}, $\tilde{z}\in \left((\mathscr H\cap\mathscr K)^\perp\right)_\omega $. It follows from Lemma \ref{lemma s} and Theorem \ref{thconvex} that there is a sequence $\{x_n\}\subseteq (\mathscr H\cap \mathscr K)^\perp$ such that for each $1\leq j\leq n$ we have
\begin{align}\label{eqlim}
\lim_{n\to \infty}(x_n+\mathscr N_{\omega_j})=\phi_j(\tilde{z}).
\end{align}
Therefore, $\phi_j(\tilde{z})\in \left((\mathscr H\cap\mathscr K)^\perp\right)_{\omega_j}$ for each $1\leq j\leq n$. We claim that $\tilde{z}\in \left(\mathscr H_\omega\cap\mathscr K_\omega\right)^\perp$. Indeed, let $\tilde{u}\in \mathscr H_\omega\cap\mathscr K_\omega$ be arbitrary. By Theorem \ref{thconvex}, there are sequences $\{h_n\}\subseteq \mathscr H$ and $\{k_n\}\subseteq\mathscr K$ such that we have 
\begin{align*}
\lim_{n\to\infty}(h_n+\mathscr N_{\omega_j})=\phi_j(\tilde{u})=\lim_{n\to\infty}(k_n+\mathscr N_{\omega_j})\qquad (1\leq j\leq n).
\end{align*}
This ensures that $\phi_j(\tilde{u})\in \mathscr H_{\omega_j}\cap\mathscr K_{\omega_j}$. By the hypothesis, $\phi_j(\tilde{u})\in (\mathscr H\cap \mathscr K)_{\omega_j}$ for each $1\leq j\leq n$. Hence, 
\[
(\phi_j(\tilde{z}),\phi_j(\tilde{u}))_{\omega_j}=0\qquad(1\leq j\leq n).
\]
Applying \eqref{convex state} we get 
\[
(\tilde{z},\tilde{u})_\omega=\sum_{j=1}^n\lambda_j(\phi_j(\tilde{z}),\phi_j(\tilde{u}))_{\omega_j}=0.
\]
Thus, $\mathscr H_\omega\cap\mathscr K_\omega\subseteq(\mathscr H\cap\mathscr K)_\omega$.
\end{proof}
In what follows, we denote the set of all bounded sequences in $\mathscr E_\omega$ by $\mathscr E_\omega^b$. 
	\begin{theorem}\label{thmain}
		Let $\mathscr E$ be a Hilbert $\mathscr A$-module, and let $\mathscr L$ be a convex subset of $\mathscr E$. Let $\omega$ and $\omega_j$'s be positive functionals such that $
		\omega=\sum_{j=1}^\infty \lambda_j\omega_j$ with $\sum_{j=1}^\infty \lambda_j=1$ and $\lambda_j>0$. 
		Then, 
		\begin{equation}\label{eqth2.4}
		\overline{\iota_\omega(\mathscr L)}=\{\tilde{z}\in \mathscr E_\omega:\exists \{x_n\}\subseteq\mathscr L,\{x_n+\mathscr N_\omega\}\in \mathscr E_\omega^b,\lim_{n\to \infty}(x_n+\mathscr N_{\omega_j})=\phi_j(\tilde{z}),~\forall j\geq 1\},
		\end{equation}
		where $\phi_j:\mathscr E_\omega\to\mathscr E_{\omega_j}$ is defined by $\phi_j(x+\mathscr N_\omega)=x+\mathscr N_{\omega_j}$ for all $x\in \mathscr E$ and for all $j\geq 1$.
	\end{theorem}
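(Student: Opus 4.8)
The plan is to establish the two inclusions defining the right-hand set of \eqref{eqth2.4}, following the skeleton of the proof of Theorem \ref{thconvex} but replacing the finite-sum bookkeeping by an isometric model for $\mathscr E_\omega$ together with a convexity argument. Throughout I write $\tilde{x}_n:=x_n+\mathscr N_\omega\in\mathscr E_\omega$, so that $\phi_j(\tilde{x}_n)=x_n+\mathscr N_{\omega_j}$. The inclusion $\subseteq$ is immediate and does not use convexity: if $\tilde z\in\overline{\iota_\omega(\mathscr L)}$, choose $\{x_n\}\subseteq\mathscr L$ with $\tilde{x}_n\to\tilde z$ in norm; then $\{\tilde{x}_n\}$ is norm-convergent, hence bounded, so $\{\tilde x_n\}\in\mathscr E_\omega^b$, and since each $\phi_j$ is bounded we obtain $x_n+\mathscr N_{\omega_j}=\phi_j(\tilde x_n)\to\phi_j(\tilde z)$ for every $j$.

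For the reverse inclusion the first step is to produce a concrete isometric model that makes the countable decomposition $\omega=\sum_j\lambda_j\omega_j$ visible. I would introduce the weighted Hilbert-space direct sum
\[
\mathcal G:=\bigoplus_{j\ge1}\mathscr E_{\omega_j},\qquad \big\langle (\xi_j)_j,(\eta_j)_j\big\rangle:=\sum_{j\ge1}\lambda_j(\xi_j,\eta_j)_{\omega_j},
\]
and define $\Phi$ on $\mathscr E/\mathscr N_\omega$ by $\Phi(x+\mathscr N_\omega):=(\phi_j(x+\mathscr N_\omega))_j=(x+\mathscr N_{\omega_j})_j$. The identity $\omega\langle x,x\rangle=\sum_j\lambda_j\omega_j\langle x,x\rangle$, valid because the series of nonnegative numbers defining $\omega=\sum_j\lambda_j\omega_j$ converges, shows at once that $\Phi$ lands in $\mathcal G$ and is isometric, so it extends to an isometry $\Phi\colon\mathscr E_\omega\to\mathcal G$ whose $j$-th coordinate is precisely the bounded map $\phi_j$. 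In particular this upgrades the finite identity \eqref{convex state} to its countable form $(\tilde u,\tilde u')_\omega=\sum_{j\ge1}\lambda_j(\phi_j(\tilde u),\phi_j(\tilde u'))_{\omega_j}$ for all $\tilde u,\tilde u'\in\mathscr E_\omega$.

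With this model in hand, suppose $\tilde z$ lies in the right-hand set, witnessed by $\{x_n\}\subseteq\mathscr L$ with $\{\tilde x_n\}$ bounded and $x_n+\mathscr N_{\omega_j}\to\phi_j(\tilde z)$ for every $j$. Then $\{\Phi(\tilde x_n)\}$ is bounded in $\mathcal G$ and converges coordinatewise to $(\phi_j(\tilde z))_j=\Phi(\tilde z)$; since the vectors supported on finitely many coordinates are total in $\mathcal G$, boundedness together with coordinatewise convergence yields $\Phi(\tilde x_n)\rightharpoonup\Phi(\tilde z)$ weakly. As $\Phi$ is an isometry, $(\tilde x_n,y)_\omega=\langle\Phi(\tilde x_n),\Phi(y)\rangle\to\langle\Phi(\tilde z),\Phi(y)\rangle=(\tilde z,y)_\omega$ for every $y\in\mathscr E_\omega$, i.e.\ $\tilde x_n\rightharpoonup\tilde z$ weakly in $\mathscr E_\omega$. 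Finally, $\iota_\omega(\mathscr L)$ is convex, being the image of the convex set $\mathscr L$ under the linear map $\iota_\omega$, so $\tilde z$, lying in the weak closure of $\iota_\omega(\mathscr L)$, belongs to its norm closure $\overline{\iota_\omega(\mathscr L)}$ by Mazur's theorem; this gives $\supseteq$.

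The main obstacle is exactly the point where the argument must diverge from Theorem \ref{thconvex}: there $\{\tilde x_n\}$ was shown to be norm-Cauchy by summing the finitely many Cauchy sequences $\{\|(x_n-x_{n'})+\mathscr N_{\omega_j}\|^2\}_j$, but for a countable sum this fails, since the tails $\sum_{j>N}\lambda_j\|(x_n-x_{n'})+\mathscr N_{\omega_j}\|^2$ cannot be controlled uniformly in $n,n'$ as long as we do not yet know that $\|\tilde x_n\|$ converges. This is precisely why the boundedness hypothesis $\{\tilde x_n\}\in\mathscr E_\omega^b$ and the convexity of $\mathscr L$ are both needed and cannot be dropped: boundedness supplies the weak (sub)convergence, and convexity, through Mazur, converts membership in the weak closure into membership in the norm closure. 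Some care is also required in justifying the two interchanges of limit and infinite sum, in establishing the isometry and in passing to the weak limit; I would handle both through the uniform tail estimate furnished by the single bound $\sum_{j}\lambda_j\|\phi_j(\tilde x_n)\|^2=\|\tilde x_n\|^2\le\sup_n\|\tilde x_n\|^2$, which is exactly what the embedding $\Phi$ encapsulates.
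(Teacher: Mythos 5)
Your proof is correct and follows essentially the same route as the paper: the paper likewise builds the isometric embedding $\Psi(\tilde{x})=(\sqrt{\lambda_j}\,\phi_j(\tilde{x}))_j$ into $\oplus_{j=1}^\infty\mathscr E_{\omega_j}$ (your weighted direct sum $\mathcal G$ is the same object), uses the boundedness hypothesis to obtain weak convergence, and uses convexity to place the weak limit in $\overline{\iota_\omega(\mathscr L)}$. The only difference is organizational: the paper extracts a weakly convergent subsequence with limit $\tilde{u}\in\overline{\iota_\omega(\mathscr L)}$ and then identifies $\tilde{u}=\tilde{z}$ via injectivity of $\Psi$, whereas you show directly that the full sequence converges weakly to $\tilde{z}$ and invoke Mazur's theorem, which is a slightly cleaner packaging of the same idea.
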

	\begin{proof} Let us denote the right-hand set in \eqref{eqth2.4} by $S$. Firstly, we demonstrate that $\overline{\iota_\omega(\mathscr L)}\subseteq S$. To do this, suppose that $\tilde{z}\in \overline{\iota_\omega(\mathscr L)}$. This implies the existence of a sequence $\{x_n\}$ in $\mathscr L$ such that $\{x_n+\mathscr N_\omega\}\in \mathscr E_\omega^b$ and $\lim_n(x_n+\mathscr N_{\omega})=\tilde{z}$. Since $\phi_j:\mathscr E_\omega\rightarrow \mathscr E_{\omega_j}$ is continuous for each $j\geq 1$, we infer that
		 $\lim_n(x_n+\mathscr N_{\omega_j})=\phi_j(\tilde{z}),j\geq 1$.
		 This shows that $\overline{\iota_\omega(\mathscr L)}\subseteq S$. 
		
		Next, let $\tilde{z}\in S$. There exists a sequence $\{x_n\}\subseteq \mathscr L$ such that $\{x_n+\mathscr N_\omega\}\in \mathscr E_\omega^b$ and 
		\begin{equation}\label{eq part part}
\lim_n(x_n+\mathscr N_{\omega_j})=\phi_j(\tilde{z})\qquad j(\geq 1).
		\end{equation}
		 The boundedness of $\{x_n+\mathscr N_\omega\}$ implies that there exists a subsequence $\{ x_{n_k}+\mathscr N_\omega\}$ that weakly converges to a vector $\tilde{u}\in \overline{\iota_\omega(\mathscr L)}$. 
		We define $\Psi:\mathscr E_\omega\longrightarrow \oplus_1^\infty \mathscr E_{\omega_j}$ by $\Psi(\tilde{x})=(\sqrt{\lambda_j}\phi_j(\tilde{x}))_j$ for $\tilde{x}\in \mathscr E_\omega$. From \eqref{eq norm}, we deduced that $\Psi$ is well-defined. The map $\Psi$ is an isometric isomorphism onto its range. In fact, for all $x,y\in \mathscr E$, we have 
		\begin{align*}
\left(x+\mathscr N_\omega,y+\mathscr N_\omega\right)_\omega=\omega(\langle x,y\rangle)&=\sum_{j=1}^{\infty}\lambda_j\omega_j\langle x,y\rangle\\
&=\sum_{j=1}^{\infty}\lambda_j\left(x+\mathscr N_{\omega_j},y+\mathscr N_{\omega_j}\right)_{\omega_j}\\
&=\sum_{j=1}^{\infty}\lambda_j\left(\phi_j(x+\mathscr N_{\omega_j}),\phi_j(y+\mathscr N_{\omega_j})\right)_{\omega_j}\\
&=\left\langle\Psi(x+\mathscr N_{\omega_j}),\Psi(y+\mathscr N_{\omega_j})\right\rangle.
		\end{align*} 
		 Hence, $\Psi(x_{n_k}+\mathscr N_\omega)$ weakly converges to $\Psi(\tilde{u})$. We claim that $\lim_{k\to\infty}(x_{n_k}+\mathscr N_{\omega_j})=\phi_j(\tilde{u})$. To establish this, let $x\in \mathscr E$ be arbitrary. Since $\Psi$ is isometric, we have 
		 \begin{align}\label{eq weak}
		 \lim_{k\to\infty}	(x_{n_k}+\mathscr N_{\omega_j},x+\mathscr N_{\omega_j})_{\omega_j}&=\lim_{k\to\infty}\left\langle \Psi({x_{n_k}+\mathscr N_\omega}),(\underbrace{0,\cdots,0,x+\mathscr N_{\omega_j}}_{j-th~ term},0,\cdots
		 	)\right\rangle\nonumber\\
		 	&=\left\langle \Psi(\tilde{u}),(
		 		\underbrace{0,\cdots,0,x+\mathscr N_{\omega_j}}_{j-th~ term},0,\cdots
		 	)\right\rangle\nonumber\\
		 	&=(\phi_{j}(\tilde{u}),x+\mathscr N_{\omega_j})_{\omega_j}.
		 	\end{align}
		 	Equation \eqref{eq weak} shows that $x_{n_k}+\mathscr N_{\omega_j}$ is weakly convergent to $\phi_j(\tilde{u})$ for all $j\geq 1$. It follows from \eqref{eq part part} that $\phi_j(\tilde{u})=\phi_j(\tilde{z})$ for all $j\geq 1$. Hence, $\Psi(\tilde{u})=\Psi(\tilde{z})$. Since $\Psi$ is an isometric, we conclude that $\tilde{z}=\tilde{u}\in \overline{\iota_\omega(\mathscr L)}$.
	\end{proof}
	
\begin{corollary}
Let $\mathscr H$ and $\mathscr K$ be closed submodules of $\mathscr E$ such that $\mathscr H\cap \mathscr K$ is an orthogonally complemented submodule. Let $\omega_j$'s be positive linear functionals on $\mathscr A$ for each $j\geq 1$. If $(\mathscr H\cap \mathscr K)_{\omega_j}=\mathscr H_{\omega_j}\cap \mathscr K_{\omega_j}$ for each $j$, then 
\begin{equation}
(\mathscr H\cap \mathscr K)_\omega=\mathscr H_\omega\cap \mathscr K_\omega\,, \quad(\omega\in \sigma\mbox{-co}\{\omega_1,\omega_2,\ldots\})\,.
\end{equation}
\end{corollary}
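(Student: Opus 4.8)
The plan is to reproduce the proof of Corollary~\ref{esk} almost verbatim, substituting the $\sigma$-convex version Theorem~\ref{thmain} for the finite version Theorem~\ref{thconvex}, and the isometry $\Psi$ built inside the proof of Theorem~\ref{thmain} for the finite identity~\eqref{convex state}. First I would fix $\omega=\sum_{j=1}^\infty\lambda_j\omega_j\in\sigma\mbox{-co}\{\omega_1,\omega_2,\ldots\}$ and dispose of the trivial inclusion $(\mathscr H\cap\mathscr K)_\omega\subseteq\mathscr H_\omega\cap\mathscr K_\omega$. For the reverse inclusion, since $(\mathscr H\cap\mathscr K)_\omega$ is a closed subspace of $\mathscr E_\omega$ by Lemma~\ref{lemma s}, it suffices to show that every $\tilde u\in\mathscr H_\omega\cap\mathscr K_\omega$ is orthogonal to every $\tilde z\in\big((\mathscr H\cap\mathscr K)_\omega\big)^\perp$, for then $\mathscr H_\omega\cap\mathscr K_\omega$ lies in the double orthocomplement of $(\mathscr H\cap\mathscr K)_\omega$, which equals $(\mathscr H\cap\mathscr K)_\omega$ itself.

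Next I would localize both vectors at the individual $\omega_j$. Given $\tilde z\in\big((\mathscr H\cap\mathscr K)_\omega\big)^\perp$, Lemma~\ref{lemma Mesland} (using that $\mathscr H\cap\mathscr K$ is complemented) places $\tilde z$ in $\big((\mathscr H\cap\mathscr K)^\perp\big)_\omega$; identifying this with $\overline{\iota_\omega\big((\mathscr H\cap\mathscr K)^\perp\big)}$ through Lemma~\ref{lemma s} and invoking Theorem~\ref{thmain} with $\mathscr L=(\mathscr H\cap\mathscr K)^\perp$ furnishes a bounded sequence $\{x_n\}\subseteq(\mathscr H\cap\mathscr K)^\perp$ with $\lim_n(x_n+\mathscr N_{\omega_j})=\phi_j(\tilde z)$ for every $j$, whence $\phi_j(\tilde z)\in\big((\mathscr H\cap\mathscr K)^\perp\big)_{\omega_j}$. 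Applying Theorem~\ref{thmain} separately to $\mathscr H$ and to $\mathscr K$ does the same for $\tilde u$, giving $\phi_j(\tilde u)\in\mathscr H_{\omega_j}\cap\mathscr K_{\omega_j}$, which by hypothesis equals $(\mathscr H\cap\mathscr K)_{\omega_j}$. Thus $\phi_j(\tilde z)$ and $\phi_j(\tilde u)$ sit in orthogonal subspaces of $\mathscr E_{\omega_j}$, so $(\phi_j(\tilde z),\phi_j(\tilde u))_{\omega_j}=0$ for all $j$.

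To finish I would transport this termwise vanishing back to $\mathscr E_\omega$ through the isometric embedding $\Psi:\mathscr E_\omega\to\oplus_1^\infty\mathscr E_{\omega_j}$, $\Psi(\tilde x)=(\sqrt{\lambda_j}\,\phi_j(\tilde x))_j$, which is well-defined and inner-product preserving by the computation in the proof of Theorem~\ref{thmain}. Since $\Psi$ preserves inner products,
\[
(\tilde z,\tilde u)_\omega=\big\langle\Psi(\tilde z),\Psi(\tilde u)\big\rangle=\sum_{j=1}^\infty\lambda_j\,(\phi_j(\tilde z),\phi_j(\tilde u))_{\omega_j}=0.
\]
As $\tilde u$ was arbitrary, $\tilde z\perp\mathscr H_\omega\cap\mathscr K_\omega$, which closes the argument.

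The only place where the infinite case genuinely differs from Corollary~\ref{esk} is this last transport step: the finite identity~\eqref{convex state} is replaced by an infinite sum, and what legitimizes it is precisely that $\Psi$ lands in the Hilbert-space direct sum (square-summability, guaranteed by~\eqref{eq norm}). I expect the main thing to watch is the boundedness requirement in Theorem~\ref{thmain}: its characterization only produces approximating sequences $\{x_n\}$ with $\{x_n+\mathscr N_\omega\}\in\mathscr E_\omega^b$. This, however, is automatic here, since boundedness is built into the defining condition of the set $S$ in~\eqref{eqth2.4}, and the memberships $\tilde z,\tilde u\in\overline{\iota_\omega(\cdot)}=S$ already supply bounded sequences. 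With each summand above equal to $0$, convergence of the series is immediate, so no further estimate is needed.
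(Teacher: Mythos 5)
Your proposal is correct and follows essentially the same route as the paper's own proof: the trivial inclusion plus the orthocomplement argument via Lemma \ref{lemma Mesland}, localization of $\tilde z$ and $\tilde u$ at each $\omega_j$ through Theorem \ref{thmain} (with Lemma \ref{lemma s} identifications), the hypothesis $(\mathscr H\cap\mathscr K)_{\omega_j}=\mathscr H_{\omega_j}\cap\mathscr K_{\omega_j}$, and the isometry $\Psi$ to sum the termwise orthogonality. Your write-up is in fact slightly more careful than the paper's, which leaves the double-orthocomplement reduction and the Lemma \ref{lemma s} identifications implicit and contains a typo in the final display (it writes $(\phi_j(\tilde z),\phi_j(\tilde z))_{\omega_j}$ where $(\phi_j(\tilde z),\phi_j(\tilde u))_{\omega_j}$ is meant).
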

\begin{proof}
It is enough to show that $\left((\mathscr H\cap\mathscr K)_\omega\right)^\perp\subseteq\left(\mathscr H_\omega\cap\mathscr K_\omega\right)^\perp$. Let $\tilde{z}\in \left((\mathscr H\cap\mathscr K)_\omega\right)^\perp$. By Lemma \ref{lemma Mesland}, $\tilde{z}\in \left((\mathscr H\cap\mathscr K)^\perp\right)_\omega$. Theorem \ref{thmain} yields the existence of a sequence $\{x_n\}\subseteq(\mathscr H\cap \mathscr K)^\perp$ such that $\{x_n+\mathscr N_\omega\}$ is bounded and 
\[
\lim_{n\to\infty}(x_n+\mathscr N_{\omega_j})=\phi_j(\tilde{z})\quad(j\geq 1).
\]
Hence, $\phi(\tilde{z})\in \left((\mathscr H\cap\mathscr K)^\perp\right)_{\omega_j}=\left((\mathscr H\cap\mathscr K)_{\omega_j}\right)^\perp$.
Let $\tilde{u}\in (\mathscr H_\omega\cap\mathscr K_\omega)$ be arbitrary. We see, by Theorem \ref{thmain}, that $\phi_j(\tilde{u})\in \mathscr H_{\omega_j}\cap\mathscr K_{\omega_j}=(\mathscr H\cap\mathscr K)_{\omega_j}$. So, $(\phi_j(\tilde{z}),\phi_j(\tilde{u}))_{\omega_j}=0$ for all $j\geq 1$. Let $\Psi$ be as in the proof of Theorem \ref{thmain}. Then, 
\[
(\tilde{z},\tilde{u})_\omega=\langle \Psi(\tilde{u}),\Psi\tilde{z})\rangle=\sum_{j=1}^{\infty}\lambda_j(\phi_j(\tilde{z}),\phi_j(\tilde{z}))_{\omega_j}=0
\]
This ensures that $\tilde{z}\in (\mathscr H_\omega\cap \mathscr K_\omega)^\perp$.
\end{proof}
\section{Results on Hilbert $\mathscr A$-modules over $W^*$-algebra}
Let $X\otimes_\pi Y$ be the tensor product $X\otimes Y$ endowed with the projective norm 
\[
\pi(u)=\inf\left\{\sum_{k=1}^n\|x_k\|\|y_k\|:u=\sum_{k=1}^n x_k\otimes y_k\right\}.
\]
 We denote its completion by $X\hat{\otimes}_\pi Y$.
 The Banach
space $X\hat{\otimes}_\pi Y$ will be referred to as the projective tensor product of the Banach
spaces $X$ and $Y$.
\begin{theorem}\cite[Proposition 2.8]{Ray}\label{theorem tensor}
	Let $X$ and $Y$ be Banach spaces. Let $u\in X\hat{\otimes}_\pi Y$ and $\epsilon>0$.
	Then, there exist bounded sequences $\{x_n\}$ and $\{y_n\}$ in respectively $X$ and $Y$ such that
	the series $\sum_{n=1}^\infty x_n\otimes y_n$ converges to $u$ and
	\[
	\sum_{n=1}^{\infty}\|x\|\|y_n\|<\pi(u)+\epsilon.
	\]
\end{theorem}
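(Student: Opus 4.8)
The plan is to exploit the density of the algebraic tensor product $X\otimes Y$ in its completion $X\hat{\otimes}_\pi Y$, together with the fact that on $X\otimes Y$ the projective norm is, by definition, the infimum of $\sum_k\|x_k\|\,\|y_k\|$ over all finite representations. First I would fix $u\in X\hat{\otimes}_\pi Y$ and $\epsilon>0$ and choose a rapidly converging sequence of finite tensors: pick $s_1\in X\otimes Y$ with $\pi(u-s_1)<\epsilon/8$, and for $n\geq 2$ pick $s_n\in X\otimes Y$ with $\pi(u-s_n)<\epsilon/2^{n+2}$. Setting $t_1=s_1$ and $t_n=s_n-s_{n-1}$ for $n\geq 2$ produces finite tensors with $\sum_{n=1}^\infty t_n=u$ (the series telescopes to $\lim_n s_n=u$) and with geometric control on the tail: $\pi(t_1)\leq \pi(u)+\epsilon/8$, while $\pi(t_n)\leq \pi(u-s_n)+\pi(u-s_{n-1})<3\epsilon/2^{n+2}$ for $n\geq 2$.

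Next I would unpack each finite tensor into elementary tensors. Since every $t_n$ lies in $X\otimes Y$, the definition of $\pi$ as an infimum lets me choose a finite representation $t_n=\sum_{i=1}^{m_n}a_i^{(n)}\otimes b_i^{(n)}$ with $\sum_{i=1}^{m_n}\|a_i^{(n)}\|\,\|b_i^{(n)}\|<\pi(t_n)+\epsilon/2^{n+2}$. Concatenating these representations over all $n$ and $i$ into a single sequence $\{x_m\otimes y_m\}_{m\geq 1}$, the total weight is bounded by $\sum_{n=1}^\infty\big(\pi(t_n)+\epsilon/2^{n+2}\big)$, which by the estimates above is strictly smaller than $\pi(u)+\epsilon$. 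Because this sum is finite, the concatenated series $\sum_m x_m\otimes y_m$ is absolutely convergent in the $\pi$-norm; its sum is therefore independent of the order of summation and equals $\sum_n t_n=u$, since each block sums exactly to $t_n$ inside $X\otimes Y$.

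It remains to force the two sequences to be bounded, which is the only genuinely delicate point. I would use the scaling invariance of elementary tensors: for any $\lambda>0$ one has $x\otimes y=(\lambda x)\otimes(\lambda^{-1}y)$. After discarding the (harmless) terms in which one factor vanishes, I may replace each surviving $x_m\otimes y_m$ by $(\lambda_m x_m)\otimes(\lambda_m^{-1}y_m)$ with $\lambda_m=(\|y_m\|/\|x_m\|)^{1/2}$, so that both new factors acquire the common norm $(\|x_m\|\,\|y_m\|)^{1/2}$. This substitution changes neither the elementary tensor itself nor the product $\|x_m\|\,\|y_m\|$, hence it preserves both the value of the series and the bound obtained above. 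Since $\sum_m\|x_m\|\,\|y_m\|<\infty$ forces $\|x_m\|\,\|y_m\|\to 0$, after rescaling both $\|x_m\|\to 0$ and $\|y_m\|\to 0$; in particular $\{x_m\}$ and $\{y_m\}$ are null sequences, and a fortiori bounded, as required.

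The hard part is really the simultaneous control of two competing demands: the strict estimate $\sum_m\|x_m\|\,\|y_m\|<\pi(u)+\epsilon$ and the boundedness of the sequences. I expect the telescoping construction with geometrically decaying errors to secure the norm bound, while the norm-balancing rescaling secures boundedness without disturbing that bound, precisely because rescaling fixes each product $\|x_m\|\,\|y_m\|$ and each elementary tensor. The verification that the rearranged, concatenated series still sums to $u$ is then a routine consequence of absolute convergence.
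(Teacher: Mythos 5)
The paper offers no proof of this statement: it is quoted (with a typo, $\|x\|$ for $\|x_n\|$) from Ryan's book, cited as \cite[Proposition 2.8]{Ray}, so there is no internal argument to compare against; your proof must stand on its own, and it does. Your construction is in fact the standard proof of Ryan's proposition: the telescoping estimates check out ($\pi(t_1)<\pi(u)+\epsilon/8$ and $\pi(t_n)<3\epsilon/2^{n+2}$ for $n\ge 2$), the block decompositions add at most $\epsilon/2^{n+2}$ each, and the total weight comes to at most $\pi(u)+3\epsilon/4<\pi(u)+\epsilon$; absolute convergence of the concatenated series (using $\pi(x\otimes y)=\|x\|\,\|y\|$) legitimately identifies its sum with $\lim_n s_n=u$ via the block partial sums; and the norm-balancing rescaling $\lambda_m=(\|y_m\|/\|x_m\|)^{1/2}$ fixes each elementary tensor and each product $\|x_m\|\,\|y_m\|$ while turning both sequences into null (hence bounded) sequences. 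The only loose end is cosmetic: after discarding the terms with a vanishing factor you may be left with finitely many tensors, whereas the statement asks for sequences; pad with zero tensors (which keeps boundedness and the sum) to restore an infinite series.
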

According to Theorem \ref{theorem tensor}, for each $u\in X\hat{\otimes}_\pi Y $ we have 
\[
\pi(u)=\inf\left\{\sum_{n=1}^\infty\|x_n\|\|y_n\|:\sum_{n=1}^\infty\|x_n\|\|y_n\|<\infty,u=\sum_{n=1}^\infty x_n\otimes y_n\right\}.
\]
Let $\mathscr A$ be a $W^*$-algebra with the predual $\mathscr A_*$. Let $\mathscr E$ be a Hilbert $\mathscr A$-module. We say that $\mathscr E$ is self-dual as defined in \cite{FRA, MT2}, if $\mathscr E'=\hat{\mathscr E}$, where 
\[
\mathscr E'=\{\tau:\mathscr E\longrightarrow \mathscr A:\tau \mbox{~is bounded~and~} \mathscr A\mbox{-linear}\}
\] 
and $\hat{\mathscr E}=\{\hat{x}:x\in \mathscr E \mbox{~and}, \hat{x}(y)=\langle x, y\rangle \mbox{~for ~all ~}y\in \mathscr E\}$.
The following result is well-known. 
\begin{proposition}\label{conjuate }\cite[Propsition 3.8]{Pa}
	Let $\mathscr A$ be a $W^*$-algebra. Let $\mathscr E$ be a self-dual Hilbert $\mathscr A$-module. Then, $\mathscr E$ is a conjuate space.
\end{proposition}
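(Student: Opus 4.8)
The plan is to produce an explicit predual for $\mathscr E$, thereby realizing $\mathscr E$ isometrically as the dual of a Banach space (that is, as a conjugate space). The natural candidate, assembled from the projective tensor product machinery of Theorem \ref{theorem tensor}, is the $\mathscr A$-balanced projective tensor product: let $W_0:=\mathscr E\hat{\otimes}_\pi\mathscr A_*$ and let $W$ be the quotient of $W_0$ by the closed subspace generated by all balancing relations $ya\otimes\rho-y\otimes(a\cdot\rho)$, where $a\in\mathscr A$, $y\in\mathscr E$, $\rho\in\mathscr A_*$, and $(a\cdot\rho)(b):=\rho(ba)$. Since right multiplication is $\sigma$-weakly continuous, $a\cdot\rho$ lies in $\mathscr A_*$, so $W$ is a well-defined Banach space. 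I would then define $\Phi:\mathscr E\to W^*$ on elementary tensors by $\Phi(x)(y\otimes\rho):=\rho(\langle x,y\rangle)$, extended by linearity and continuity; because $\langle x,ya\rangle=\langle x,y\rangle a$, the functional $\Phi(x)$ annihilates every balancing relation and hence descends to $W$.

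The next step is to show $\Phi$ is isometric. The bound $\|\Phi(x)\|\le\|x\|$ is immediate from the Cauchy--Schwarz inequality $\|\langle x,y\rangle\|\le\|x\|\,\|y\|$ together with $\|y\otimes\rho\|_\pi\le\|y\|\,\|\rho\|$. For the reverse inequality I would invoke that $\mathscr A$ is a $W^*$-algebra, so the norm of the positive element $\langle x,x\rangle$ equals $\sup\{\rho(\langle x,x\rangle):\rho\in\mathscr A_*\text{ a normal state}\}$; testing $\Phi(x)$ against the vectors $x\otimes\rho$ (which have $W$-norm at most $\|x\|$) then yields $\|\Phi(x)\|\ge\|x\|^2/\|x\|=\|x\|$.

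The crux is surjectivity, and this is precisely where self-duality is used. Given $F\in W^*$, I would lift it to a bounded functional on $W_0$ vanishing on the balancing subspace. For each fixed $y\in\mathscr E$ the map $\rho\mapsto F(y\otimes\rho)$ is a bounded linear functional on $\mathscr A_*$, and since $\mathscr A=(\mathscr A_*)^*$ it is represented by a unique element $\tau(y)\in\mathscr A$ with $\rho(\tau(y))=F(y\otimes\rho)$. The estimate $|\rho(\tau(y))|\le\|F\|\,\|y\|\,\|\rho\|$ gives $\|\tau(y)\|\le\|F\|\,\|y\|$, so $\tau:\mathscr E\to\mathscr A$ is bounded and linear; and vanishing of $F$ on the balancing relations translates, via $\rho(\tau(ya))=F(ya\otimes\rho)=F(y\otimes a\cdot\rho)=\rho(\tau(y)a)$, into the module identity $\tau(ya)=\tau(y)a$. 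Thus $\tau\in\mathscr E'$, and by the hypothesis $\mathscr E'=\hat{\mathscr E}$ there is $x\in\mathscr E$ with $\tau=\hat{x}$, i.e. $\tau(y)=\langle x,y\rangle$. Then $\Phi(x)$ and $F$ agree on every $y\otimes\rho$, so $\Phi(x)=F$. Hence $\Phi$ is an isometric isomorphism and $W$ is a predual of $\mathscr E$.

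I expect the main obstacle to be the module bookkeeping rather than any hard analysis: one must pass to the $\mathscr A$-balanced tensor product so that functionals on the predual correspond to $\mathscr A$-linear (not merely linear) maps, and one must check that the balancing relations convert exactly into $\tau(ya)=\tau(y)a$, which is the module-linearity needed to feed into self-duality. A secondary point to handle consistently is the (conjugate-)linearity convention of the inner product: with $\langle\cdot,\cdot\rangle$ linear in the second slot, $\Phi$ is conjugate-linear in $x$, which still exhibits $\mathscr E$ as a conjugate space, and one may compose with complex conjugation on $W$ if an honestly linear identification is desired.
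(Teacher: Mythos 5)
The paper itself contains no proof of this proposition: it is stated as well known and attributed to Paschke \cite[Proposition 3.8]{Pa}, so there is no internal argument to compare against. What you have written is, in effect, a correct and self-contained reconstruction of Paschke's original proof, packaged slightly differently. Your predual is the $\mathscr A$-balanced projective tensor product $W=(\mathscr E\hat{\otimes}_\pi\mathscr A_*)/N$, whereas the classical model is the closed subspace of the Banach dual of $\mathscr E$ spanned by the functionals $x\mapsto\rho(\langle x,y\rangle)$ with $y\in\mathscr E$, $\rho\in\mathscr A_*$; the two are canonically identified, and your pairing $\Phi$ is exactly that identification. All three essential steps check out: the bound $\|\Phi(x)\|\le\|x\|$ follows from the universal property of the projective norm (your Cauchy--Schwarz estimate on elementary tensors is what gets fed into it); the reverse bound correctly uses that in a $W^*$-algebra the norm of a positive element is the supremum of its values at normal states; and surjectivity is where self-duality enters, precisely as you describe: a functional on $W$ yields, via $\mathscr A=(\mathscr A_*)^*$, a bounded map $\tau:\mathscr E\to\mathscr A$, the balancing relations convert into the module identity $\tau(ya)=\tau(y)a$, and self-duality then produces $x$ with $\tau=\hat{x}$, hence $F=\Phi(x)$. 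Your closing remark on conjugate-linearity is also the right bookkeeping: since the inner product is conjugate-linear in its first slot, $\Phi$ is conjugate-linear, so $\mathscr E$ is linearly isometric to the dual of the conjugate space $\overline{W}$ --- which is exactly why the paper's discussion following the proposition introduces a space with twisted scalar multiplication.
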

 Making use of Proposition \ref{conjuate }, we consider the weak$^*$-topology on $\mathscr E$. The set of all weak$^*$-continuous linear functionals on $\mathscr E$ is a subspace of $\mathscr{A}_*\hat{\otimes}_\pi \mathscr Y$ where $\mathscr Y$ is the linear space $\mathscr X$ with twisted scalar multiplication (i.e., $\lambda . x =\bar{\lambda}x$
for $\lambda\in \mathbb{C},x\in \mathscr Y$).
 According to \cite[Proposition 2.8]{Ray}, we can represent each $u\in \mathscr{A}_*\hat{\otimes}_\pi \mathscr Y$ as $\sum_{j=1}^\infty \lambda_j\tau_j\otimes y_j$ where $\lambda_j>0$, $\sum_{j=1}^\infty\lambda_j<\infty$, and $\|\tau_j\|=\|y_j\|=1$. To achieve our main result, we need to establish some lemmas.

\begin{lemma}\label{lem inequality}
	Let $\mathscr E$ be a Hilbert $\mathscr A$-module over a $C^*$-algebra $\mathscr A$. Let $\sigma(\cdot,\cdot)$ be an $\mathscr A$-valued semi-inner product on $\mathscr E$. Let $x_0,z_i\in \mathscr E$ and $\lambda_i>0$ , $(i=1,\ldots ,n)$ be scalars such that $\sum_{i=1}^n\lambda_i=1$. Then, 
	\[
	\sum_{i=1}^n\lambda_i\sigma(z_i-x_0,z_i-x_0)\geq \sigma\left(\sum_{i=1}^n\lambda_iz_i-x_0,\sum_{i=1}^n\lambda_iz_i-x_0\right)\,.
	\] 
\end{lemma}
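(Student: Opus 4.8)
The plan is to remove $x_0$ by a translation and then reduce the claim to a Jensen-type inequality for the semi-inner product $\sigma$. Setting $a_i := z_i - x_0$ and using $\sum_{i=1}^n \lambda_i = 1$, one has $\sum_{i=1}^n \lambda_i z_i - x_0 = \sum_{i=1}^n \lambda_i a_i$, so the asserted inequality becomes
\[
\sum_{i=1}^n \lambda_i \sigma(a_i, a_i) \ge \sigma\Big(\sum_{i=1}^n \lambda_i a_i, \sum_{i=1}^n \lambda_i a_i\Big).
\]
Thus it suffices to prove this convexity statement for arbitrary $a_1,\ldots,a_n \in \mathscr E$, with the $z_i$ and $x_0$ no longer playing any role beyond this substitution.

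First I would expand both sides as double sums. Since $\sum_j \lambda_j = 1$, the left-hand side equals $\sum_{i,j} \lambda_i \lambda_j \sigma(a_i, a_i)$, while the $\mathscr A$-sesquilinearity of $\sigma$ gives $\sigma\big(\sum_i \lambda_i a_i, \sum_j \lambda_j a_j\big) = \sum_{i,j} \lambda_i \lambda_j \sigma(a_i, a_j)$. Hence the difference of the two sides equals $\sum_{i,j}\lambda_i\lambda_j\big(\sigma(a_i,a_i) - \sigma(a_i,a_j)\big)$.

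Next I would symmetrize over the pair $(i,j)$. Relabeling the summation indices shows $\sum_{i,j}\lambda_i\lambda_j\sigma(a_i,a_j) = \tfrac12 \sum_{i,j}\lambda_i\lambda_j\big(\sigma(a_i,a_j)+\sigma(a_j,a_i)\big)$, and likewise $\sum_{i,j}\lambda_i\lambda_j\sigma(a_i,a_i) = \tfrac12\sum_{i,j}\lambda_i\lambda_j\big(\sigma(a_i,a_i)+\sigma(a_j,a_j)\big)$, so the difference rearranges to
\[
\tfrac12 \sum_{i,j} \lambda_i\lambda_j\big(\sigma(a_i,a_i) - \sigma(a_i,a_j) - \sigma(a_j,a_i) + \sigma(a_j,a_j)\big) = \tfrac12 \sum_{i,j}\lambda_i\lambda_j\, \sigma(a_i - a_j,\, a_i - a_j).
\]
Each summand is a positive element of $\mathscr A$, since $\sigma(a_i-a_j, a_i-a_j) \ge 0$ by positivity of the semi-inner product and $\lambda_i\lambda_j > 0$. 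Because a finite sum of positive elements is again positive, the difference is $\ge 0$, which is exactly the assertion.

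There is no serious obstacle here; the argument is the familiar quadratic-convexity computation, the only substantive ingredient being the expansion of the squared norm. The one point requiring care is that the ordering $\ge$ is the $C^*$-order on $\mathscr A$, so I must invoke that $\sigma(b,b)\ge 0$ for all $b$, that multiplication by positive scalars preserves positivity, and that the cone of positive elements is closed under addition. I would also note that the identity $\sum_i \lambda_i a_i = \sum_i \lambda_i z_i - x_0$ depends crucially on $\sum_i \lambda_i = 1$, which is precisely why the normalization hypothesis is needed.
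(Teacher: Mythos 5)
Your proof is correct and is essentially the paper's argument: both hinge on expanding the double sum $\sum_{i,j}\lambda_i\lambda_j\sigma(\cdot,\cdot)$ and invoking positivity of $\sigma$ on differences, the paper doing so via the pairwise inequality $\sigma(z_r,z_s)+\sigma(z_s,z_r)\le\sigma(z_r,z_r)+\sigma(z_s,z_s)$ (which is exactly your symmetrized sum-of-squares identity, aggregated over pairs $r<s$). The only cosmetic difference is that you translate $x_0$ away at the outset, whereas the paper carries $x_0$ through the expansion and lets the cross terms cancel.
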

\begin{proof}
	Since $\sigma(y-x,y-x)\geq 0$, we have $\sigma(y,x)+\sigma(x,y)\leq \sigma(y,y)+\sigma(x,x)$. Hence,
	\begin{align*}
	\sum_{r,s=1}^n\lambda_r\lambda_s\sigma(z_r,z_s)&=\sum_{r=1}^n\lambda_r^2\sigma(z_r,z_r)+\sum_{1\leq r<s\leq n}\lambda_r\lambda_s\left(\sigma(z_r,z_s)+\sigma(z_s,z_r)\right)\\
	&\leq \sum_{r=1}^n\lambda_r^2\sigma(z_r,z_r)+\sum_{1\leq r<s\leq n}\lambda_r\lambda_s\left(\sigma(z_r,z_r)+\sigma(z_s,z_s)\right)\\
	&=\sum_{r=1}^n\lambda_r\sigma(z_r,z_r).
	\end{align*}
	This entails that 
	\begin{align*}
	\sum_{i=1}^n\lambda_i\sigma(z_i-x_0,z_i-x_0)&=\sigma(x_0,x_0)-\sum_{i=1}^n\lambda_i\left(\sigma(z_i,x_0)+\sigma(x_0,z_i)\right)+\sum_{i=1}^n\lambda_i\sigma(z_i,z_i)\\
	&\geq \sigma(x_0,x_0)-\sum_{i=1}^n\lambda_i\left(\sigma(z_i,x_0)+\sigma(x_0,z_i)\right)+\sum_{r,s=1 }^n\lambda_r\lambda_s\sigma(z_r,z_s)\\
	&=\sigma\left(\sum_{i=1}^n\lambda_iz_i-x_0,\sum_{i=1}^n\lambda_iz_i-x_0\right).
	\end{align*}
\end{proof}
\begin{lemma}\label{lem characterzation state}
Let $\mathscr A$ be a $C^*$-algebra	and let 
$$\mathscr B=\oplus_{j=1}^\infty\mathscr{A}=\{(a_j)_j:a_j\in \mathscr A_j \mbox{~and ~}\sup_j\|a_j\|<\infty\}$$ be equipped with the norm $\|(a_j)_j\|=\sup_j\|a_j\|$. Let $\lambda_j>0$ and $\sum_{j=1}^\infty \lambda_j=1$. If $\{\omega_j\}$ is a bounded sequence in $\mathscr A$, then $\omega\left((a_j)_j\right)=\sum_{j=1}^\infty\lambda_j\omega_j(a_j)$ defines a bounded functional on $\mathscr B$. 
\end{lemma}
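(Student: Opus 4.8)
The plan is to verify the three required properties---well-definedness, linearity, and boundedness---directly, since the whole statement is an elementary consequence of the hypothesis $\sum_{j=1}^\infty\lambda_j=1$ together with the uniform bound on the functionals $\omega_j$. First I would set $M:=\sup_j\|\omega_j\|$, which is finite because $\{\omega_j\}$ is assumed to be a bounded sequence of functionals, and record the pointwise estimate
\[
|\omega_j(a_j)|\leq \|\omega_j\|\,\|a_j\|\leq M\sup_k\|a_k\|=M\|(a_j)_j\|\qquad(j\geq 1)
\]
valid for every $(a_j)_j\in\mathscr B$, where the middle inequality uses the definition of the norm on $\mathscr B$.

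The main step is to show that the defining series converges absolutely, so that $\omega$ is unambiguously defined on all of $\mathscr B$. Using the estimate above, I would bound
\[
\sum_{j=1}^\infty\lambda_j|\omega_j(a_j)|\leq \sum_{j=1}^\infty\lambda_j M\|(a_j)_j\|=M\|(a_j)_j\|\sum_{j=1}^\infty\lambda_j=M\|(a_j)_j\|<\infty,
\]
where the final equality invokes $\sum_{j=1}^\infty\lambda_j=1$. Hence the series defining $\omega\left((a_j)_j\right)$ converges absolutely, and $\omega$ is well-defined. Linearity is then immediate, as each $\omega_j$ is linear and the (absolutely convergent) series respects finite linear combinations coordinatewise. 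Finally, the same chain of inequalities yields $|\omega((a_j)_j)|\leq M\|(a_j)_j\|$, so that $\omega$ is bounded with $\|\omega\|\leq M=\sup_j\|\omega_j\|$.

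There is essentially no genuine obstacle here: the only point demanding any care is ensuring \emph{absolute} rather than merely conditional convergence of the series, which is precisely what the combination of the uniform bound $M$ and the summability condition $\sum_{j}\lambda_j=1$ delivers. Once absolute convergence is secured, the verifications of linearity and of the norm bound $\|\omega\|\leq M$ are routine.
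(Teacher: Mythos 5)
Your proof is correct, and it is in fact tidier than the paper's own argument, though both hinge on the same key estimate. The paper declares well-definedness ``evident'' and then proves boundedness indirectly: it takes a sequence $(a_{n,j})_j\to(a_j)_j$ in $\mathscr B$, applies the bound $\sum_j\lambda_j|\omega_j(a_{n,j}-a_j)|\leq m\sum_j\lambda_j\|a_{n,j}-a_j\|<m\epsilon$ to the differences, and concludes that $\omega$ is sequentially continuous, hence bounded. You instead apply the very same inequality directly to $(a_j)_j$ itself, which simultaneously settles the point the paper glosses over (absolute convergence of the defining series, i.e.\ well-definedness) and yields the quantitative conclusion $|\omega((a_j)_j)|\leq M\|(a_j)_j\|$, so $\|\omega\|\leq\sup_j\|\omega_j\|$. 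What your route buys is exactly this explicit norm bound plus an honest treatment of well-definedness; what the paper's route buys is nothing extra---it is the same computation packaged as a continuity argument, with the slight overhead of invoking the equivalence of continuity and boundedness for linear functionals. There is no gap in your proposal.
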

\begin{proof}
	It is evident that $\omega$ is a well-defined linear functional. We demonstrate that $\omega$ is bounded. Let $\lim_n(a_{n,j})_j=(a_j)_j$ and let $\epsilon>0$ be arbitrary. There exists $n_0>0$ such that for each $j$ and $n>n_0$ we have 
	\[
	\|a_{n,j}-a_j\|<\epsilon\,.
	\]
	Since $\{\omega_j\}$ is a bounded sequence, there exists $m>0$ such that $\|\omega_j\|\leq m$ for each $j\geq 1$. Hence, for each $n>n_0$, we have 
	\begin{align*}
	|\omega\left((a_{n,j})_j-(a_j)\right)|&=\left|\sum_j\lambda_j\omega_j(a_{n,j}-a_j)\right|\\
	&\leq \sum_j|\lambda_j\omega_j(a_{n,j}-a_j)|\\
	&\leq m\sum_j\lambda_j\|(a_{n,j}-a_j)\|<m\epsilon\,.
	\end{align*}
	Therefore, $\lim_{n\to\infty}\omega\left((a_{n,j})_j\right)=\omega\left((a_j)_j\right)$. This ensures that $\omega$
 is bounded.\end{proof}

Let $\mathscr A$ be a $C^*$-subalgebra of $\mathcal{B}(\mathscr H)$, where $\mathscr H$ is a Hilbert space. We say that a state $\omega$ on $\mathscr A$ is a vector state if there exists a vector $x\in \mathscr H$ such that $\omega(v)=\langle vx,x\rangle$ for all $v\in \mathscr A$. The next result may be well-known. However, we present a proof for it.

\begin{theorem}\label{sigma convex of von Neuamann alegebra}
	Let $\mathscr A$ be a $W^*$-algebra. Then, the normal state space of $\mathscr A$ is the $\sigma$-convex hull of its vector states. 
\end{theorem}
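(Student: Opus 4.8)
The plan is to fix, once and for all, a faithful normal representation of $\mathscr A$ on a Hilbert space $\mathscr H$, so that $\mathscr A$ becomes a von Neumann algebra acting on $\mathscr H$ and the notions of vector state and normal state are unambiguous (such a representation exists for any $W^*$-algebra). Writing $\omega_x(v)=\langle vx,x\rangle$ for the vector state associated with a unit vector $x\in\mathscr H$, I would prove that the normal state space equals $\sigma\mbox{-co}(\{\omega_x:\|x\|=1\})$ by establishing the two inclusions separately. Throughout I use that a functional $\omega$ on $\mathscr A$ is normal precisely when it is $\sigma$-weakly (ultraweakly) continuous.

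For the inclusion of the $\sigma$-convex hull into the normal state space I would start from $\omega=\sum_{j=1}^{\infty}\lambda_j\omega_{x_j}$ with $\lambda_j>0$, $\sum_j\lambda_j=1$, and unit vectors $x_j$. Positivity and the normalization $\omega(1)=\sum_j\lambda_j=1$ are immediate, so $\omega$ is a state; its boundedness can alternatively be read off from Lemma~\ref{lem characterzation state} applied to the diagonal embedding $\mathscr A\hookrightarrow\bigoplus_j\mathscr A$. For normality I would set $\xi_j:=\sqrt{\lambda_j}\,x_j$, so that $\sum_j\|\xi_j\|^2=\sum_j\lambda_j=1<\infty$ and
\[
\omega(v)=\sum_{j}\langle v\xi_j,\xi_j\rangle\qquad(v\in\mathscr A);
\]
a functional of this form is $\sigma$-weakly continuous, hence normal. (Equivalently, the partial sums are normal and converge to $\omega$ in norm, and the predual $\mathscr A_*$ is norm closed in $\mathscr A^*$.) This direction is routine.

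The substantive direction is that every normal state lies in the $\sigma$-convex hull of the vector states, and here I would invoke the standard structure theorem for the predual of a von Neumann algebra: a positive normal functional $\omega$ on $\mathscr A\subseteq\mathcal B(\mathscr H)$ can be written as $\omega(v)=\sum_{j}\langle v\xi_j,\xi_j\rangle$ for some sequence $(\xi_j)$ in $\mathscr H$ with $\sum_j\|\xi_j\|^2=\|\omega\|$. For a state $\|\omega\|=\omega(1)=1$. Discarding the vanishing terms and setting $\lambda_j:=\|\xi_j\|^2>0$ and $x_j:=\xi_j/\|\xi_j\|$, I obtain unit vectors with $\sum_j\lambda_j=1$ and $\omega=\sum_j\lambda_j\omega_{x_j}$, which exhibits $\omega$ as a $\sigma$-convex combination of vector states.

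Two points would need care. If the sequence $(\xi_j)$ produced by the structure theorem has only finitely many nonzero terms (for instance when $\omega$ comes from a finite-rank density operator), the resulting expression is a finite convex combination and does not literally match the definition of $\sigma\mbox{-co}$, which requires a genuine series with strictly positive coefficients; this is repaired by splitting a single term, $\lambda_1\omega_{x_1}=\sum_{k=1}^{\infty}2^{-k}\lambda_1\,\omega_{x_1}$, and re-indexing into one sequence whose weights are all positive and sum to $1$. The main obstacle is the structure theorem itself, whose content is exactly the identification of normal functionals with sums of vector functionals (equivalently, with trace-class operators); everything else is bookkeeping. I would therefore either cite it from a standard monograph (such as those of Takesaki or Pedersen) or, for a self-contained argument, derive the positive case from the fact that the $\sigma$-weakly continuous functionals form the norm closure of the finite sums $v\mapsto\sum_{j=1}^{n}\langle v\xi_j,\eta_j\rangle$, combined with a polar-decomposition argument reducing a positive normal functional to the diagonal form $\eta_j=\xi_j$.
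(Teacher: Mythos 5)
Your proof is correct, and it rests on the same fundamental fact as the paper's proof: the structure theorem representing normal functionals on a von Neumann algebra by countable sums of vector functionals. The difference lies in how that fact is invoked, and your version is the more robust one. The paper cites the trace-class representation $\tau(v)=\operatorname{tr}(uv)$ (Murphy, \cite[Theorem~4.2.10]{R}) and then immediately writes $u$ in spectral form with weights $\mu_j$ summing to $1$; this silently assumes that $u$ can be taken \emph{positive}, which is not automatic from that citation when $\mathscr A$ is a proper von Neumann subalgebra of $\mathcal{B}(\mathscr H)$ --- positivity of $\tau$ is only tested against positive elements of $\mathscr A$, so an arbitrary representing trace-class operator need not be positive (consider $\mathscr A=\mathbb{C}1$). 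The theorem you invoke instead --- a positive normal functional equals $\sum_j\langle \cdot\,\xi_j,\xi_j\rangle$ with $\sum_j\|\xi_j\|^2=\|\omega\|$, as in Takesaki, Pedersen, or Kadison--Ringrose --- has the positivity built in and is exactly what the paper's spectral step actually requires. You also supply three things the paper omits: the easy inclusion (a $\sigma$-convex combination of vector states is $\sigma$-weakly continuous, hence a normal state); the choice of a faithful normal representation, which is what makes ``vector state'' meaningful for an abstract $W^*$-algebra (the paper's definition of vector state presupposes a concrete $\mathscr A\subseteq\mathcal{B}(\mathscr H)$, and faithfulness is needed for the hard inclusion); and the bookkeeping that turns a finite convex combination into a genuine $\sigma$-convex one with strictly positive weights, as the paper's definition of $\sigma\mbox{-co}$ demands. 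In short: same underlying theorem, but your route through the positive-functional form closes gaps that the paper's route through an unspecified trace-class operator leaves open.
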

\begin{proof}
	Let $\tau$ be a normal sate on $\mathscr A$. By \cite[Theorem 4.2.10]{R}, there exists a trace class operator $u$ such that $\tau(v)=\text{tr}(uv)$. Consider an orthonormal sequence $\{\eta_j\}$ and real numbers $\mu_j$ such that $\sum_{j=1}^\infty \mu_j=1$ and $u(x)=\sum_{j=1}^\infty\mu_j\langle \eta_j,x\rangle\eta_j$. Then, 
	\[
	\tau(v)=\text{tr}(uv)=\sum_{j=1}^\infty \langle uv(\eta_j),\eta_j\rangle= \sum_{j=1}^\infty \mu_j\langle v(\eta_j),\eta_j\rangle\,.
	\] 
\end{proof}
\begin{theorem}\label{separation}
Let $\mathscr L$ be a closed submodule of a self-dual Hilbert $\mathscr A$-module $\mathscr E$ over a $W^*$-algebra $\mathscr A$ such that $\mathscr E\backslash \mathscr L$ has nonempty interior with respect to the weak$^*$-topology. Then, there exists a normal state $\omega$ such that $\iota_\omega (\mathscr L)$ is not dense in $\mathscr E_\omega $. In particular, $\iota_\omega (\mathscr L)^\perp\neq \{0\}$.
\end{theorem}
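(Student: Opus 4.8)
The plan is to reduce the statement to a weak$^*$-separation argument and then to convert the resulting separating functional into a genuine \emph{normal state} for which the separation is not destroyed by passing to the localization $\mathscr E_\omega$.

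First I would exploit the hypothesis on the weak$^*$-interior. Choose $x_0$ in the weak$^*$-interior of $\mathscr E\backslash\mathscr L$; then $x_0$ has a weak$^*$-neighbourhood disjoint from $\mathscr L$, so $x_0$ does not lie in the weak$^*$-closure $\overline{\mathscr L}^{\,w^*}$. Since $\mathscr L$ is a submodule, $\overline{\mathscr L}^{\,w^*}$ is a weak$^*$-closed subspace of the dual Banach space $\mathscr E$ (Proposition \ref{conjuate }). Applying the Hahn--Banach separation theorem in the locally convex space $(\mathscr E, w^*)$ produces a weak$^*$-continuous linear functional $f$ on $\mathscr E$ with $f|_{\overline{\mathscr L}^{\,w^*}}=0$ and $f(x_0)\neq 0$; in particular $f$ vanishes on $\mathscr L$.

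Next I would realize $f$ through the predual. As recalled in the excerpt, every weak$^*$-continuous functional on $\mathscr E$ lies in $\mathscr A_*\hat{\otimes}_\pi\mathscr Y$, so Theorem \ref{theorem tensor} lets me write $f=\sum_{j=1}^{\infty}\lambda_j\,\tau_j\otimes y_j$ with $\tau_j\in\mathscr A_*$, $\|\tau_j\|=\|y_j\|=1$, $\lambda_j>0$ and $\sum_j\lambda_j<\infty$; after rescaling $f$ I may assume $\sum_j\lambda_j=1$, so that $f(x)=\sum_j\lambda_j\,\tau_j(\langle y_j,x\rangle)$. The difficulty here is that the $\tau_j$ are normal functionals, not states. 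I would remove this obstruction with the polar decomposition of normal functionals: writing $\tau_j(a)=|\tau_j|(v_j a)$ for all $a\in\mathscr A$, where $v_j$ is a partial isometry and $\||\tau_j|\|=\|\tau_j\|=1$, I set $z_j:=y_j\cdot v_j^{*}$, so that $\tau_j(\langle y_j,x\rangle)=|\tau_j|(v_j\langle y_j,x\rangle)=|\tau_j|(\langle z_j,x\rangle)$ and $\|z_j\|\le 1$. Now each $\omega_j:=|\tau_j|$ is a normal state, and I define $\omega:=\sum_{j=1}^{\infty}\lambda_j\,\omega_j$. The series converges in norm in $\mathscr A_*$ and its sum is positive and unital, hence $\omega$ is a normal state (alternatively this follows from Theorem \ref{sigma convex of von Neuamann alegebra}).

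Finally I would transport $f$ to $\mathscr E_\omega$ using the isometry $\Psi:\mathscr E_\omega\to\bigoplus_{j}\mathscr E_{\omega_j}$ of Theorem \ref{thmain}. The vector $\zeta:=(\sqrt{\lambda_j}\,\iota_{\omega_j}(z_j))_j$ satisfies $\|\zeta\|^2=\sum_j\lambda_j\,\omega_j\langle z_j,z_j\rangle\le\sum_j\lambda_j=1$, so $\zeta\in\bigoplus_j\mathscr E_{\omega_j}$, and $F(\tilde w):=\langle\zeta,\Psi(\tilde w)\rangle$ is a bounded linear functional on $\mathscr E_\omega$. A direct computation from the definition of $\Psi$ gives $F(\iota_\omega(x))=\sum_j\lambda_j\,\omega_j(\langle z_j,x\rangle)=f(x)$ for every $x\in\mathscr E$. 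Since $f$ vanishes on $\mathscr L$, the continuous functional $F$ vanishes on $\overline{\iota_\omega(\mathscr L)}$, while $F(\iota_\omega(x_0))=f(x_0)\neq 0$; hence $\iota_\omega(x_0)\notin\overline{\iota_\omega(\mathscr L)}$ and $\iota_\omega(\mathscr L)$ is not dense in $\mathscr E_\omega$. Representing $F$ by a vector via the Riesz theorem then yields a nonzero element of $\iota_\omega(\mathscr L)^{\perp}$, which is the ``in particular'' assertion. I expect the delicate step to be the middle one: turning the purely weak$^*$-analytic separating functional into a bona fide normal \emph{state} $\omega$ without losing the separation after localization. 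The two devices that make it work are the polar decomposition, which trades the non-positivity of $\tau_j$ for the harmless twist $z_j=y_j v_j^{*}$ of the module vectors, and the factorization $f=\langle\zeta,\Psi(\,\cdot\,)\rangle$ through $\mathscr E_\omega$; note that the latter uses only the boundedness of $\Psi$ and does \emph{not} require $\zeta$ to lie in the range of $\Psi$.
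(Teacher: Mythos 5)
Your proposal is correct, and its first half coincides with the paper's own proof: both choose $x_0$ outside the weak$^*$-closure of $\mathscr L$, separate it from $\mathscr L$ by a weak$^*$-continuous functional via Hahn--Banach, and expand that functional as $\sum_j\lambda_j\tau_j\otimes y_j$ in $\mathscr A_*\hat{\otimes}_\pi\mathscr Y$ using Theorem~\ref{theorem tensor}. The two arguments genuinely diverge at the positivity step, where the signed/complex coefficients $\tau_j$ must be traded for a normal state. The paper runs a \emph{second} separation: it forms the set $\mathscr I=\{(\langle y_j,l-x_0\rangle\langle l-x_0,y_j\rangle)_j:l\in\mathscr L\}$ of positive elements of $\oplus_j\mathscr A$, uses the convexity inequality of Lemma~\ref{lem inequality} to show $0\notin\overline{\mbox{co}(\mathscr I)}$ in the topology built from Lemma~\ref{lem characterzation state}, separates again by Hahn--Banach, passes to the positive part of the resulting functional (Jordan decomposition), and concludes with $\langle y,u\rangle\langle u,y\rangle\le\|y\|^2\langle u,u\rangle$, obtaining a normal state $\omega$ with the uniform bound $\omega\langle l-x_0,l-x_0\rangle>\epsilon$ for all $l\in\mathscr L$. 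You instead make the coefficients positive at the source, via the polar decomposition $\tau_j=v_j|\tau_j|$ of normal functionals, absorbing the partial isometry into the module variable ($z_j=y_jv_j^*$), so that the separating functional becomes $\sum_j\lambda_j\omega_j\langle z_j,\cdot\rangle$ with $\omega_j=|\tau_j|$ normal states; you then set $\omega=\sum_j\lambda_j\omega_j$ and transport the separation to $\mathscr E_\omega$ through the isometry $\Psi$ of Theorem~\ref{thmain}, getting a bounded functional $F$ with $F\circ\iota_\omega=f$ that kills $\overline{\iota_\omega(\mathscr L)}$ but not $\iota_\omega(x_0)$. Your route is shorter: it dispenses with the second separation, the convexity lemma, and the Jordan decomposition, at the price of invoking the (standard, but external to the paper) polar decomposition theorem for normal functionals on a von Neumann algebra; the paper's route stays self-contained within its own Lemmas~\ref{lem inequality} and~\ref{lem characterzation state} and produces the distance estimate directly on $\omega\langle l-x_0,l-x_0\rangle$, whereas yours produces it through the norm-one functional $F$ (an equivalent bound $\|\iota_\omega(x_0)-\iota_\omega(l)\|\ge|f(x_0)|$). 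Two cosmetic caveats: the side of the polar decomposition must match the inner-product convention $\langle ya,x\rangle=a^*\langle y,x\rangle$ (it does, as you wrote it, and either side can be arranged); and the normality of $\omega$ follows simply from norm-closedness of $\mathscr A_*$ in $\mathscr A^*$ --- your parenthetical appeal to Theorem~\ref{sigma convex of von Neuamann alegebra}, which concerns vector states, is not the right reference for that point.
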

\begin{proof}
	Let $x_0\in \mathscr E\backslash \mathscr L$ be a vector that is not in the weak$^*$-closure of $\mathscr L$. The Hahn Banach theorem implies that for each $j$, there exist a linear functional $\omega_j\in \mathscr{A}_*$, an element $y_j\in \mathscr E$, and scalars $\lambda_j>0$ such that $\|\omega_j\|=\|y_j\|=1$, $\sum_{j=1}^\infty\lambda_j<\infty$, as well as
\begin{align*}
\sum_{j=1}^\infty\lambda_j\omega_j\langle x_0,y_j\rangle=1\quad {\rm and} \quad \sum_{j=1}^\infty \lambda_j\omega_j\langle l,y_j\rangle=0\quad(l\in \mathscr L_1)\,.
\end{align*}
In particular, it follows that
\begin{equation}\label{eq separation L}
\sum_{j=1}^\infty\lambda_j\omega_j\langle x_0-l,y_j\rangle=1\,,
\end{equation}
for all $l\in \mathscr L$.
 Let $\mathscr{B}=\oplus_j\mathscr A$. Then, $\mathscr B$ is a von-Neumann algebra. Let $\tau$ be the topology on $\oplus_j\mathscr A$ induced by the following functionals 
 \[
 \omega: \mathscr B\to \mathbb{C},\quad\omega\left((a_j)_j\right)=\sum_{j=1}^\infty \lambda_j\omega_j(a_j)\qquad(a_j\in \mathscr A),
 \]
 where $\omega_j\in \mathscr{A}_*$ and $\{\|\omega_j\|\}$ is bounded. Set $$\mathscr I:=\{(\langle y_j,l-x_0\rangle\langle l-x_0,y_j\rangle)_j : l\in \mathscr L\}\subseteq \mathscr B.$$
 Equality \eqref{eq separation L} ensures that $0$ does not belong to the closure $\mathscr L$ in the topology of $\tau$. In fact, if $\lim_\alpha(\langle y_j,l_\alpha-x_0\rangle\langle l_\alpha-x_0,y_j\rangle)_j=0$ for some net $\{l_\alpha\}\subseteq \mathscr L$, then $\lim _\alpha\|(\langle l_\alpha-x_0,y_j\rangle)_j\|=0$. It follows from Lemma \ref{lem characterzation state} that $\lim_\alpha\sum_{j=1}^\infty\lambda_j\omega_j(\langle l_n-x_0,y_j\rangle=0$, which contradicts \eqref{eq separation L}. 
 
 Next, $0\not \in \overline{\mbox{co}(\mathscr I)}^\mathfrak{T}$, since for each $\{l_1,\cdots,l_n\}\subseteq \mathscr L$, $\alpha_i>0$ and $\sum_{i=1}^n\alpha_i=1$, by employing Lemma \ref{lem inequality} for $\mathscr A$-valued semi-inner product $\sigma$ on $\mathscr \oplus_j E$ defined by 
 $
 \sigma\left((u_j)_j,(v_j)_j\right)=\left(\langle y_j, u_j\rangle\langle v_j,y_j \rangle\right)_j,
 $
 we have 
 \[
 \sum_{i=1}^n\alpha_i\left(\langle y_j,l_n-x_0\rangle\langle l_n-x_0,y_j\rangle\right)_j\geq \left\langle y_j,\sum_{i=1}^n\alpha_il_n-x_0\right\rangle\left\langle\sum_{i=1}^n\alpha_i l_n-x_0,y_j\right\rangle.
 \]
 Thus, $0\not \in \overline{\mbox{co}(\mathscr I)}^\mathfrak{T}$. From the Hahn-Banach Theorem, we can deduce that there exist $\epsilon>0$ and $\tau=(\tau_j)_j$, where $\tau_j\in\mathscr{A}_*$ such that ${\rm Re\,}\tau((a_j)_j)>\epsilon$ for each $(a_j)_j\in \overline{\mbox{co}(\mathscr I)}^\mathfrak{T}$. On the other hand, there are positive normal functionals $\tau_{1,j},\tau_{2,j},\tau_{3,j}$, and $\tau_{4,j}$ such that $\tau_j=\tau_{1,j}-\tau_{2,j}+i(\tau_{3,j}-\tau_{4,j})$. Thus, 
 $\sum_j\lambda_j\tau_{1,j}(\langle y_j,l-x_0\rangle\langle l-x_0,y_j\rangle)>\epsilon$ for each $l\in \mathscr L$. Since $\langle y_j,l-x_0\rangle\langle l-x_0,y_j\rangle\leq \|y_j\|^2\langle l-x_0,l-x_0\rangle$ we have 
 \begin{align*}
 (\sum_{j=1}^\infty\lambda_j\tau_{1,j})(\langle l-x_0,l-x_0\rangle)= \sum_{j=1}^\infty\lambda_j\tau_{1,j}(\langle l-x_0,l-x_0\rangle)>\epsilon.
\end{align*}
Hence, $\tau=\frac{\sum_{j=1}^\infty\lambda_j\tau_{1,j}}{\|\sum_{j=1}^\infty\lambda_j\tau_{1,j}\|}$ is a normal state such that $x_0+\mathscr N_\tau\not \in \overline{\iota_\tau(\mathscr L)}$. 
\end{proof}
The next result is as follows.
\begin{theorem}
Let $\mathscr L$ be a closed submodule of a self-dual Hilbert $\mathscr A$-module $\mathscr E$ over a $W^*$-algebra $\mathscr A$ such that $\mathscr E\backslash \mathscr L$ has a nonempty interior in the weak$^*$-topology. Then, there exist vector states $\omega_1,\ldots,\omega_n$ such that $\iota_\omega (\mathscr L_1)$ is not dense in the unit ball of $\mathscr E_\omega $ for each $\omega\in \mbox{co}\{\omega_1,\ldots,\omega_n\}$.
\end{theorem}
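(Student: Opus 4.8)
The plan is to prove this as a simultaneous strengthening of Theorem \ref{separation}, replacing the single separating normal state by a finite convex combination of vector states. I would start from the conclusion of Theorem \ref{separation}: fix $x_0\in\mathscr E\backslash\mathscr L$ lying outside the weak$^*$-closure of $\mathscr L$. Since the weak$^*$-closure of $\mathscr L$ is again a subspace, I may rescale and assume $\|x_0\|\le 1$. The proof of Theorem \ref{separation} then yields a normal state $\tau$ and a constant $\delta>0$ with
\[
\inf_{l\in\mathscr L_1}\tau(\langle x_0-l,x_0-l\rangle)\ge\delta .
\]
Equivalently, $\iota_\tau(x_0)$, which lies in the unit ball of $\mathscr E_\tau$ because $\|\iota_\tau(x_0)\|^2=\tau(\langle x_0,x_0\rangle)\le\|x_0\|^2\le 1$, sits at distance at least $\sqrt\delta$ from $\overline{\iota_\tau(\mathscr L_1)}$.

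Next I would feed $\tau$ into Theorem \ref{sigma convex of von Neuamann alegebra} and write $\tau=\sum_{j=1}^\infty\mu_j\rho_j$ as a $\sigma$-convex combination of vector states $\rho_j$, with $\mu_j>0$ and $\sum_j\mu_j=1$. The quantitative engine is the boundedness of $\mathscr L_1$: for every $l\in\mathscr L_1$ one has $\|\langle x_0-l,x_0-l\rangle\|\le(\|x_0\|+1)^2=:M$, so that $0\le\rho_j(\langle x_0-l,x_0-l\rangle)\le M$ uniformly in $j$ and in $l$. Hence the tail of the defining series is controlled independently of $l$, and choosing $n$ so large that $M\sum_{j>n}\mu_j<\delta/2$ gives
\[
\sum_{j=1}^n\mu_j\,\rho_j(\langle x_0-l,x_0-l\rangle)>\tfrac{\delta}{2}\qquad(l\in\mathscr L_1).
\]
Renormalising the weights exhibits a finite convex combination of the vector states $\omega_1:=\rho_1,\dots,\omega_n:=\rho_n$ that still separates $x_0$ from $\mathscr L_1$.

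To obtain the conclusion for every $\omega\in\mathrm{co}\{\omega_1,\dots,\omega_n\}$ I would use linearity together with Lemma \ref{lem inequality}. Once the finite family is arranged so that each vertex satisfies $\inf_{l\in\mathscr L_1}\omega_k(\langle x_0-l,x_0-l\rangle)\ge\delta_0>0$, any $\omega=\sum_k\alpha_k\omega_k$ with $\alpha_k\ge 0$ and $\sum_k\alpha_k=1$ inherits $\inf_{l}\omega(\langle x_0-l,x_0-l\rangle)\ge\delta_0$, since the left-hand side is the corresponding convex average of nonnegative quantities, each bounded below by $\delta_0$. Consequently $\iota_\omega(x_0)$ lies in the unit ball of $\mathscr E_\omega$ and stays at distance at least $\sqrt{\delta_0}$ from $\overline{\iota_\omega(\mathscr L_1)}$, which is exactly the assertion that $\iota_\omega(\mathscr L_1)$ is not dense in that unit ball.

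The main obstacle is the strength of the quantifier ``for each $\omega\in\mathrm{co}\{\omega_1,\dots,\omega_n\}$''. Since $\omega\mapsto\inf_{l}\omega(\langle x_0-l,x_0-l\rangle)$ is an infimum of linear functionals, hence concave, its minimum over the simplex is attained at a vertex, and a short computation swapping the two infima shows
\[
\inf_{\omega\in\mathrm{co}\{\omega_1,\dots,\omega_n\}}\ \inf_{l\in\mathscr L_1}\omega(\langle x_0-l,x_0-l\rangle)=\min_{1\le k\le n}\ \inf_{l\in\mathscr L_1}\omega_k(\langle x_0-l,x_0-l\rangle).
\]
Thus positivity for the whole convex hull forces \emph{every} vertex state $\omega_k$ to separate $x_0$ from $\mathscr L_1$ on its own. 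The vector states $\rho_j$ produced by the $\sigma$-convex decomposition of $\tau$ need not have this property individually: the example preceding Theorem \ref{thconvex} already displays two vector states whose average separates although neither does alone. The heart of the argument is therefore the \emph{selection} of the finite family — extracting finitely many vector states each of which alone separates $x_0$ (equivalently, for each of which $\iota_{\omega_k}(\mathscr L_1)$ is not dense), rather than merely a collection whose average does. I expect this to be the step demanding genuine work, exploiting the non-uniqueness of the decomposition of $\tau$ into vector states together with the uniform bound $M$ furnished by the boundedness of $\mathscr L_1$ to force a common positive lower bound on each chosen vertex.
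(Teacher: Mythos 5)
Your proposal follows the same skeleton as the paper's proof: Theorem \ref{separation} supplies the witness $x_0$ and a normal state $\tau$ separating it uniformly (with a constant $\delta>0$) from $\mathscr L_1$; Theorem \ref{sigma convex of von Neuamann alegebra} decomposes $\tau=\sum_j\mu_j\rho_j$ into vector states; finitely many $\rho_j$ are selected; and convex combinations of the selected states inherit the separation. The difference lies in the selection step, and there your route is genuinely simpler. The paper covers $\overline{\iota_\tau(\mathscr L_1)}$ by the sets $B_{j,n}=\{\tilde z:\|x_0+\mathscr N_{\omega_j}-\phi_j(\tilde z)\|>\tfrac1n\}$, using Theorem \ref{thmain} for the covering property, and then extracts a finite subcover after asserting compactness of $\overline{\iota_\tau(\mathscr L_1)}$ (justified by a claimed weak$^*$-compactness of $\mathscr L_1$, itself not obvious for a merely norm-closed submodule). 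Your tail estimate $M\sum_{j>n}\mu_j<\delta/2$, which uses only that $\mathscr L_1$ is norm-bounded and that states are contractive, reaches the same position with no localization machinery and no compactness claim at all; this part of your argument is correct and is arguably an improvement on the corresponding step of the paper.

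The obstacle you flag at the end --- that the quantifier ``for each $\omega\in\mathrm{co}\{\omega_1,\ldots,\omega_n\}$'' forces each vertex $\omega_k$ to separate $x_0$ from $\mathscr L_1$ on its own, and that neither your truncation nor the decomposition of $\tau$ delivers this --- is real, but you should know that the paper does not overcome it either. After its finite subcover, the paper asserts $\omega_{j_k}\langle x-x_0,x-x_0\rangle\ge\delta$ \emph{for all} $x\in\mathscr L_1$ \emph{and all} $k=1,\ldots,n_0$; what a finite subcover can actually yield is only that for each $x\in\mathscr L_1$ \emph{there exists} some $k$ with this inequality --- exactly the position your truncation reaches (if $\sum_{j\le n}\mu_j\rho_j(\cdot)>\delta/2$, then some single term exceeds $\delta/(2n)$). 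From the existential version one obtains separation, with lower bound $\delta\min_k\gamma_k$, for every combination $\omega=\sum_k\gamma_k\omega_{j_k}$ whose coefficients are \emph{all strictly positive}, but nothing at the vertices or proper faces of the simplex; and the example preceding Theorem \ref{thconvex} shows that ``the average separates while no single state does'' genuinely occurs. So the step you expected to demand genuine work is, in the paper, a quantifier slip rather than an argument: both your proof and the paper's establish the theorem with $\mathrm{co}$ read as convex combinations having strictly positive weights, and neither establishes it for the closed convex hull with vertices included. Your proposal therefore proves exactly what the paper's proof actually proves, by a cleaner route, and your final paragraph correctly identifies an unrepaired gap in the paper's own argument rather than a missing idea on your side.
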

\begin{proof}
It follows from Theorem \ref{separation} that there exists a unit vector $x_0\in \mathscr E\backslash \mathscr L$ and a normal state $\tau $ such that $x_0+\mathscr N_\tau\not\in \overline{\iota_\tau(\mathscr L)}$. Theorem \ref{sigma convex of von Neuamann alegebra} ensures that $\tau=\sum_{j=1}^\infty \lambda_j\omega_j$ for some vector states $\omega_j$ and scalars $\lambda_j$ such that $\sum_{j=1}^\infty \lambda_j=1$. Let $\mathscr L_1$ be the unit ball of $\mathscr L$. Let 
\[
B_{j,n}=\{\tilde{z}\in\overline{\iota_{\tau}(\mathscr L)}: \|x_0+\mathscr N_{\omega_j}-\phi_j(\tilde{z})\| >\frac{1}{n}\}\quad(j,n\geq 1)\,.
\]
Employing Theorem \ref{thmain}, we get $\overline{\iota_\tau(\mathscr L_1)}\subseteq \cup_{j,n=1}^\infty B_{j,n}$. Since $\mathscr L_1$ is a weak$^*$-compact set, we can conclude that $\overline{\iota_\tau(\mathscr L_1)}$ is also a weak$^*$-compact set. Hence, there exists $\delta>0$ and $n_0\geq 0$ such that 
\[
\omega_{j_k}\langle x-x_0,x-x_0\rangle\geq \delta \quad(x\in \mathscr L_1, k=1,\ldots, n_0)\,.
\]
Let $\omega=\sum_{k=1}^{n_0}\gamma_{k}\omega_{j_k}$ and $\sum_{k=1}^{n_0}\gamma_{k}=1$. We conclude from Theorem \ref{thconvex} that 
$x_0+\mathscr N_{\omega}\not\in \overline{\iota_\omega(\mathscr L_1)}$
\end{proof}
In the following, we provide an example that meets the hypotheses of Theorem \ref{separation}.
\begin{example}
	Let $\mathscr A$ be the $W^*$-algebra of all bounded linear operators on a separable Hilbert space $\mathscr H$. Consider $\mathscr E=\mathscr A$ as a Hilbert $\mathscr A$-module. Let $\mathscr F=\mathscr A\oplus \mathscr A$. If $\mathscr{M}$ be a proper orthogonally complemented submodule of $\mathscr E$, then set $\mathscr L=\mathbb{K}(\mathscr H)\oplus \mathscr{M}$, where $\mathbb{K}(\mathscr H)$ is the $C^*$-algebra of all compact operators on $\mathscr H$ \cite{DAN}. By employing \cite[Theorem 4.1.15]{R}, we observe that $\mathbb{K}(\mathscr H)$ is not orthogonally complemented in $\mathscr A$. Thus, $\mathscr L$ is not orthogonally complemented in $\mathscr F$. For each nonzero $x_0\in \mathscr{M}^\perp$, it can be observed that $0\oplus x_0$ is an interior point of $\mathscr F\backslash \mathscr L$ in the weak$^*$-topology. In fact, if a net $\{k_\alpha\oplus x_\alpha\}_\alpha$ converges to $0\oplus x_0$, then for a normal state $\omega$ on $\mathscr A$ such that $\omega\langle x_0,x_0\rangle=\|x_0\|^2$ we have 
	\[
	0=\lim_{\alpha}\omega\langle k_\alpha\oplus x_\alpha,0\oplus x_0\rangle=\omega\langle 0\oplus x_0,0\oplus x_0\rangle=\|x_0\|^2\,,
	\]
	which leads to a contradiction.
\end{example}
 
 \medskip
 \section*{Disclosure statement}
 
 On behalf of the authors, the corresponding author states that there is no conflict of interest. Data sharing is not applicable to this paper as no datasets were generated or analyzed during the current study.
 \medskip


\begin{thebibliography}{99}
	
\bibitem{DAN} R. Eskandari, M. S. Moslehian, and D. Popovici, \textit{Operator equalities and characterizations of orthogonality in pre-Hilbert $C^*$-modules}, Proc. Edinb. Math. Soc. (2) \textbf{64} (2021), no. 3, 594--614.

\bibitem{FRA} M. Frank, \textit{Geometrical aspects of Hilbert $C^*$-modules}, Positivity \textbf{3} (1999), 215--243.



\bibitem{Kaad} J. Kaad and M. Lesch, \textit{A local global principle for regular operators in Hilbert $C^*$-modules}, J. Funct. Anal. \textbf{262} (2012), no. 10, 4540-4569.

\bibitem{Kaad2} J. Kaad and M. Lesch, \textit{Corrigendum to “A local global principle for regular operators in Hilbert $C^*$-modules” [J. Funct. Anal. 262 (10) (2012) 4540–4569]}, J. Funct. Anal. \textbf{272} (2017), 4403-4406.


\bibitem{MT} V. M. Manuilov and E. V. Troitsky, \textit{Hilbert $C^*$-modules}, Translations of Mathematical Monographs. 226, Amer. Math. Soc., Providence, RI, 2005.

\bibitem{MT2} V. M. Manuilov and E. V. Troitsky, \textit{Hilbert $C^*$-modules with Hilbert dual and $C^*$-Fredholm operators}, Integral Equations Operator Theory \textbf{95} (2023), no. 3, Paper No. 17, 12 p.
	\bibitem{Mes} B. Mesland and A. Rennie, \textit{The Friedrichs angle and alternating projections in Hilbert $C^*$-modules}, J. Math Anal. Appl. \textbf{516} (2022) 126474.
\bibitem{R} G. J. Murphy, \textit{$C^*$-algebras and operator theory}, Academic Press, INC, 1990.

\bibitem{Pa} W. L. Paschke, \textit{Inner product modules over $B^*$-algebras}, Trans. Amer. Math. Soc. \textbf{182} (1972), 443--468.

\bibitem{Pri} F. Pierrot,\textit{ Opérateurs réguliers dans les $C^*$-modules et structure des $C^*$-algèbres de groupes de Lie semisimples complexes simplement connexes}, J. Lie Theory 16 (4) (2006) 651–689.

\bibitem{Ray} A. R. Rayan, \textit{Introduction to tensor products of Banach spaces}, Springer Monographs in Mathematics, Springer, New York, 2002.

\bibitem{Ru} W. Rudin, \textit{Functional Analysis}, McGraw Hill, INC, 1973.

\bibitem{Sa} S. Sakai, \textit{ $C^*$-algebras and $W^*$-algebras}, Springer, New York, 1971.
\end{thebibliography}
	\end{document}